\title{Regularity theory for nonlinear integral operators}
\author{Luis Caffarelli, Chi Hin Chan, Alexis Vasseur}
\newlength{\hchng}
\newlength{\vchng}
\newtheorem{thm}{Theorem}[section]
\newtheorem{cor}[thm]{Corollary}
\newtheorem{lemma}[thm]{Lemma}
\newtheorem{preremark}[thm]{Remark}
\numberwithin{equation}{section}
\newcommand{\var}{w}
\newcommand{\pha}{F}
\newcommand{\A}{\mathcal{A}}
\newcommand{\ds}{\displaystyle}
\newcommand{\R}{\mathbb R}
\newcommand{\eps}{\varepsilon}
\begin{document}
\maketitle
\bibliographystyle{plain}
\noindent{\bf Abstract:} 
This article is dedicated to the proof of the existence of classical solutions for a class of non-linear integral variational problems. Those problems are involved
in nonlocal image and signal processing.

\vskip0.3cm \noindent {\bf Keywords:}
Non linear partial differential equation, non local operators, integral variational problems, De Giorgi methods,  image and signal processing.

\vskip0.3cm \noindent {\bf Mathematics Subject Classification:}
35B65, 45G05, 47G10.

\section{Introduction}

The purpose of this work is to develop a regularity theory for non-local evolution equations of variational type with "measurable" kernels. More precisely, we consider solutions of the evolution equations of the type
\begin{equation}\label{star}
w_t(t,x)=\int[w(t,y)-w(t,x)]K(t,x,y)\,dy,
\end{equation}
where all that is required of the kernel $K$ is that there exists  $0<s<1$ and $0<\Lambda$, such that 
\begin{equation}\label{diese}
\begin{array}{l}
\mathrm{symmetry  \  in} \  x,y:\qquad  K(t,x,y)=K(t,y,x) \  \mathrm{for \  any \ } x\neq y,\\[0.3cm]
\ds{{\mathbf 1}_{\{|x-y|\leq 3\}}\frac{(1-s/2)}{\Lambda}|x-y|^{-(N+s)}\leq K(t,x,y)\leq (1-s/2){\Lambda}|x-y|^{-(N+s)}.}
\end{array}
\end{equation}
The symmetry of the kernel $K$ makes of the operator: 
$$
\int[w(y)-w(x)]K(x,y)\,dy 
$$
the Euler Lagrange equation of the  energy integral
$$
E(w)=\int\int[w(x)-w(y)]^2K(x,y)\,dx\,dy.
$$
It suggests a mathematical treatment based on the De Giorgi-Nash-Moser ideas \cite{DeGiorgi,Nash} from the calculus of variations. In fact, one of the immediate applications of our result is to nonlinear variational integrals 
$$
E_{\phi}(w) =\int\int\phi(w(x)-w(y))K(x-y)\,dx\,dy,
$$
for $\phi$ a $C^2$ strictly convex functional. Indeed, the fact that $K(x,y)$ has the special form $K(x-y)$ makes the equation translation invariant, and as in the second order case, this implies that first derivatives of $w$ satisfy an equation of the type (\ref{star}). Our results are basically that solutions with initial data in $L^2$ become instantaneously bounded and Holder continuous. In these lines, see the work of Kassmann \cite{Kassmann}, Kassmann and Bass \cite{BassKassmann} (see also 
 \cite{BassLevin} and \cite{nonlocalDirichletform}),
where the Moser approach for the stationary case is fully developped. 
 For the non divergence case there is a recent work of 
Silvestre (see \cite{Silvestre2006}, \cite{CaffarelliSilvestre}, and references therein). We were motivated by our work on Navier-Stokes \cite{vasseur}  and 
the quasigeostrophic equations \cite{Driftdiffusion}. In this work, the full regularity of the solutions to the surface quasi-geostrophic equation is shown in the critical case. It was followed by several works on the same subject in the super-critical case (see for instance \cite{ConstantinWu}). Note also that the result was obtained, using completely different techniques  by Kiselev, Nazarov and Volberg \cite{kiselev}.  Our approach led to some progress in the supercritical case (see  \cite{silvestre-2008,chan-2009}).  
It  follows pretty much the lines of the De Giorgi's work \cite{DeGiorgi}. 
Non linear equations of this form appear extensively in the phase transition literature (see Giacomin, Lebowitz, and Presutti \cite{Presutti})
and more recently on issues of image processing (Gilboa and Osher \cite{gilboa}). 

\section{Presentation of the results}

Consider the variational integral 
$$
V(\theta)=\int_{\mathbb{R}^{N}} \int_{\mathbb{R}^{N}} \phi (\theta (y) - \theta (x)) K (y-x) dy dx, 
$$
 for  $\phi : \mathbb{R} \rightarrow [0, \infty )$   an even convex function of class $C^{2} (\mathbb{R})$  satisfying the conditions 
 \begin{equation}\label{Hypothesis_Phi}
 \begin{array}{l}
\displaystyle{\phi (0) = 0},\\ 
\displaystyle{ \Lambda^{-1/2} \leq \phi '' (x)\leq \Lambda^{1/2} ,\qquad x\in\R,}
\end{array}
\end{equation}
for  a given constant $1<\Lambda<\infty$.

The kernel $K : \mathbb{R}^{N} - \{0\} \rightarrow (0, \infty )$ is supposed to satisfy the following  conditions for a $0<s<2$.
\begin{equation}\label{Hypothesis_K}
\begin{array}{l}
\displaystyle{K(-x) = K(x) , \qquad \mathrm{ for \  any \  } x \in \mathbb{R}^{N} - \{0\}, }\\[0.3cm]
\displaystyle{{\mathbf 1}_{\{|x\leq 3\}}(1-s/2)\frac{\Lambda^{-1/2}}{|x|^{N+s}} \leq K(x) \leq (1-s/2)\frac{\Lambda^{1/2}}{|x|^{N+s}},\qquad \mathrm {for \  any  \ } x \in \mathbb{R}^{N} - \{0\}. } 
\end{array}
\end{equation}
With the above setting, the corresponding Euler-Lagrange equation for the variational integral $\int_{\mathbb{R}^{N}} \int_{\mathbb{R}^{N}} \phi (\theta (y) - \theta (x)) K (y-x) dy dx $ is given by

\begin{equation*}
- \int_{\mathbb{R}^{N}} \phi ' (\theta (y) - \theta (x)) K(y-x) dy = 0 . 
\end{equation*}

We are considering in this paper the associated time dependent problem:
\begin{equation}\label{nonlinearparabolic}
\partial_{t} \theta (t,x) - \int_{\mathbb{R}^{N}} \phi ' (\theta (t,y) - \theta (t,x)) K(y-x) dy = 0.  
\end{equation}

The main goal of this paper is to address the regularity problem for solutions to the above parabolic-type equation and establish the following main theorem.

\begin{thm}\label{thm1}
Consider an even convex function $\phi$ verifying the Hypothesis (\ref{Hypothesis_Phi}) and a kernel $K$ verifying Hypothesis (\ref{Hypothesis_K}) for a $0<s<2$.
Then, for any  initial datum $\theta_{0} \in H^1(\mathbb{R}^{N})$ , there exists a global classical solution  to Equation \ref{nonlinearparabolic} with $\theta (0 , \cdot ) = \theta_{0}$ in the $L^{2}(\mathbb{R}^{N})$ sense. Moreover $\nabla_x\theta \in C^{\alpha}( (t_0,\infty ) \times \mathbb{R}^{N}) $ for any $t_0>0$.
\end{thm}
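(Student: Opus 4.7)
The plan is to follow the De Giorgi--Nash--Moser program in the nonlocal setting: construct a global weak energy solution; promote it to an $L^\infty$ and then H\"older-continuous one by nonlocal De Giorgi iteration and oscillation decay; and finally exploit the translation invariance $K=K(y-x)$ to reduce the regularity of $\grad\theta$ to the linear theory of equations of the form (\ref{star}) subject to (\ref{diese}). Existence of a weak solution is produced by a standard approximation (truncate $\phi'$ and mollify $K$ near the origin so that the right-hand side of (\ref{nonlinearparabolic}) is Lipschitz on $L^2$, then Galerkin) together with the basic energy identity, obtained by testing against $\theta$ and symmetrizing in $x,y$:
\[
\frac{1}{2}\frac{d}{dt}\norm{\theta(t)}_{L^2}^2 + \frac{1}{2}\int\!\!\int \phi'(\theta(y)-\theta(x))(\theta(y)-\theta(x)) K(y-x)\dd y\dd x = 0.
\]
Since $\phi$ is even we have $\phi'(0)=0$, and combined with $\phi''\geq\Lambda^{-1/2}$ this yields $\phi'(u)u\geq \Lambda^{-1/2}u^2$, so the energy identity furnishes a uniform bound in $L^\infty_t L^2_x \cap L^2_t H^{s/2}_x$ sufficient to pass to the limit.

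The next and hardest step is to turn the weak solution into a $C^\alpha$ one. Multiplying (\ref{nonlinearparabolic}) by $(\theta-C)_+$ and using the pointwise convexity estimate $(\phi'(b)-\phi'(a))(b_+-a_+)\geq \Lambda^{-1/2}(b_+-a_+)^2$ gives a nonlocal De Giorgi energy inequality for the truncations, whose iteration along a sequence of levels $C_k\uparrow L(\norm{\theta_0}_{L^2})$ delivers the instantaneous bound $\theta\in L^\infty((\tau,\infty)\times\R^N)$ for every $\tau>0$. The technical core is then the oscillation-decay lemma: if $\osc_{Q_1}\theta\leq M$, then $\osc_{Q_{1/2}}\theta\leq (1-\eta)M$ for a universal $\eta>0$. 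In the nonlocal framework this requires an ``intermediate value'' (isoperimetric-type) lemma that tracks the contribution of $\theta$ coming from outside $Q_1$: only the lower bound $K\geq c\abs{x-y}^{-(N+s)}$ on $\set{\abs{x-y}\leq 3}$ yields coercivity between intermediate level sets, and one must carefully balance the energy dissipation from truncating against the possibly adverse far-field term. Iterating this lemma in the classical way gives $\theta\in C^\alpha((t_0,\infty)\times\R^N)$.

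Finally, translation invariance allows the linearization. For each $h\in\R^N$ the finite difference $\var(t,x):=\theta(t,x+h)-\theta(t,x)$ satisfies, via the mean-value theorem applied to $\phi'$,
\[
\partial_t \var(t,x)=\int [\var(t,y)-\var(t,x)]\,\tilde K(t,x,y)\dd y,\qquad \tilde K(t,x,y):=\phi''(\xi(t,x,y))K(y-x),
\]
where $\tilde K$ is measurable, symmetric in $(x,y)$ (using that $\phi''$ is even), and satisfies (\ref{diese}) with constants depending only on $\Lambda$ and $s$. The linear regularity theory thus applies to $\var$ uniformly in $h$; dividing by $\abs{h}$ and sending $h\to 0$ identifies $\grad\theta$ as a weak solution of the same linear class, and one final application of the linear $C^\alpha$ theorem produces $\grad\theta\in C^\alpha((t_0,\infty)\times\R^N)$. (For $1\leq s<2$ one first iterates the H\"older gain, or works with second-order differences, to justify the passage to the gradient before invoking the linear theory.) The real difficulty is concentrated in the nonlocal oscillation lemma of the middle step; everything after that reduces to translation invariance and the scalar linear theory for (\ref{star})--(\ref{diese}).
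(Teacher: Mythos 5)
Your proposal is correct in substance, and its decisive step---observing that translation invariance makes finite differences (equivalently, difference quotients) of $\theta$ solve the linear equation (\ref{star}) with kernel $\phi''(\cdot)\,K(y-x)$, symmetric by evenness of $\phi''$ and satisfying (\ref{diese}) with the same $\Lambda$ uniformly in $h$, then invoking the linear H\"older theorem and letting $h\to 0$---is exactly the paper's proof of Theorem \ref{thm1}, which reduces everything to Theorem \ref{thm2}. Two remarks. First, your middle step (instantaneous $L^\infty$ bound and a nonlinear oscillation lemma giving $\theta\in C^\alpha$ before linearizing), which you present as the technical core, is superfluous for this theorem: the bounds on the linearized kernel come solely from $\Lambda^{-1/2}\le\phi''\le\Lambda^{1/2}$ in (\ref{Hypothesis_Phi}) and require no a priori boundedness or regularity of $\theta$, so the De Giorgi machinery (including the far-field bookkeeping via the tail function) is needed only once, at the level of the linear equation for the difference quotients, which is where the paper places it. Second, to make the linearization rigorous you should use the averaged mean-value identity $\phi'(Y)-\phi'(X)=(Y-X)\int_0^1\phi''((1-\tau)X+\tau Y)\,d\tau$ in the weak formulation (testing with $D_e^{-h}\eta$ and discretely integrating by parts, as the paper does), rather than an MVT point $\xi(t,x,y)$, so that the kernel is manifestly measurable and symmetric; and your parenthetical caveat for $1\le s<2$ is unnecessary: since $\theta_0\in H^1$, the difference quotients $D_e^h\theta$ are controlled in $L^\infty_t L^2_x$ uniformly in $h$ by the nonincreasing energy of the linear equation they satisfy, so Theorem \ref{thm2} gives uniform $C^\alpha$ bounds for every $0<s<2$ and the limit $h\to0$ directly identifies $\nabla_x\theta$.
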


The existence of weak solutions with nonincreasing energy can be constructed following \cite{Fourrier}.
To address the regularity problem for solutions to Equation (\ref{nonlinearparabolic}), we follow the classical idea of De Giorgi and look at the first derivative $D \theta$ of a solution $\theta$ to Equation (\ref{nonlinearparabolic}). 
First, we use the change of variable $y = x + z$ to rewrite Equation (\ref{nonlinearparabolic}) as follows
\begin{equation}\label{eq_newversion}
\partial_{t} \theta  - \int_{\mathbb{R}^{N}} \phi ' (\theta ( x + z ) - \theta (x)) K(z) dz = 0 .
\end{equation}
Now, we consider $w=D_e\theta$, the derivative in the direction $e$ of $\theta$. Derivating (formally) Equation (\ref{nonlinearparabolic}) in the direction $e$ we find
\begin{equation*}
\partial_{t} w  - \int_{\mathbb{R}^{N}} \phi '' (\theta ( x + z ) - \theta (x)) \{ w (x + z) -w (x)   \}  K(z) dz = 0 . 
\end{equation*}

We then use the change of variable back to $y = x + z$  to rewrite the above equation in the following way
\begin{equation*}
\partial_{t} w - \int_{\mathbb{R}^{N}} \phi '' (\theta ( y ) - \theta (x)) \{ w (y) - w (x)   \}  K(y-x) dz = 0 . 
\end{equation*}

Consider the new kernel $K (t,x,y) = \phi '' (\theta (t, y ) - \theta (t,x)) K(y-x) $ (with an obvious slight abuse for notation). Since $\phi$ is an even function, $\phi ''$ is also an even function, and hence the new kernel $K (t,x, y)$ is symmetric in $x$ and $y$. Moreover, Hypothesis (\ref{Hypothesis_K}) and 
(\ref{Hypothesis_Phi}) implies  that $K (t,x,y)$ satisfies the condition 
$$
(1-s/2){\mathbf 1}_{\{|x-y|\leq 3\}}\frac{\Lambda^{-1}}{|x-y|^{N +s}} \leq K (t,x, y ) \leq (1-s/2)\frac{\Lambda}{|x-y|^{N+s}}.
$$ 
As a result, the function $w = D_e\theta$ satisfies  Equation (\ref{star})
with the kernel $K (t,x,y)$ verifying Hypothesis (\ref{diese}).
Our goal is then to show that solutions to Equation (\ref{star})  are in $C^\alpha$. 
\vskip0.3cm

To make the argument rigorous, we will consider the difference quotient 
$D_{e}^{h} \theta ( \cdot ) = \frac{1}{h} \{ \theta ( \cdot + h e ) - \theta (\cdot )      \} $. We use again the version (\ref{eq_newversion}) of Equation (\ref{nonlinearparabolic}).
For any given $\eta \in C_{c}^{\infty}(\mathbb{R}^{N})$, we use the difference quotient $D_{e}^{-h} \eta $ to test against it, and we get
\begin{equation*}
 \int_{\mathbb{R}^{N}} \partial_{t} \theta (t,x) D_e^{-h} \eta (x) dx   -  \int_{\mathbb{R}^{N}} \int_{\mathbb{R}^{N}} \phi ' (\theta (t, x + z ) - \theta (t,x)) D_{e}^{-h} \eta (x) dx
K(z) dz = 0 .
\end{equation*}

Using the discrete integration by part $ \int_{\mathbb{R}^{N}} f(x) D_e^{-h} g (x) dx = - \int_{\mathbb{R}^{N}} D_e^{h}f (x) g(x) dx $, we find
\begin{equation*}
 \int_{\mathbb{R}^{N}} \partial_{t} D_e^{h} \theta (t,x) \cdot \eta (x) dx   -  \int_{\mathbb{R}^{N}} \int_{\mathbb{R}^{N}} D_e^{h}[\phi ' (\theta ( \cdot + z ) - \theta (\cdot ))](x) \cdot \eta (x) dx
K(z) dz = 0.
\end{equation*}
The change of variable $y = x + z$ leads to
\begin{equation*}\label{introducionequationtwo}
\int_{\mathbb{R}^{N}} \partial_{t} D_e^{h} \theta (t,x) \cdot \eta (x) dx   -  \int_{\mathbb{R}^{N}} \int_{\mathbb{R}^{N}} D_e^{h}[\phi ' (\theta ( \cdot + y-x ) - \theta (\cdot ))](x) \cdot \eta (x)  K(y-x) dx dy = 0 .
\end{equation*}
Note that  $\phi$ is an even function, so $\phi '$ is an odd function and consequently 
\begin{equation*}
D_e^{h}[\phi ' (\theta ( \cdot + y-x ) - \theta (\cdot ))](x)  = - D_e^{h}[\phi ' (\theta ( \cdot + x-y ) - \theta (\cdot ))](y) .
\end{equation*}
Using also the symmetry of $K$, we can symmetrize the operator to get
\begin{equation}\label{introducionequationthree}
\int_{\mathbb{R}^{N}} \partial_{t} D_e^{h} \theta (t,x) \cdot \eta (x) dx   - \frac{1}{2} \int_{\mathbb{R}^{N}} \int_{\mathbb{R}^{N}} D_e^{h}[\phi ' (\theta ( \cdot + y-x ) - \theta (\cdot ))](x) \cdot [\eta (x) - \eta (y)]  K(y-x) dx dy = 0 .
\end{equation}
Setting $Y=\theta(y+he)-\theta(x+he)$ and $X=\theta(y)-\theta(x)$, we get
\begin{equation*}
\begin{split}
&D_e^{h}[\phi ' (\theta ( \cdot + y-x ) - \theta (\cdot ))](x)  = \frac{1}{h} \{ \phi ' (  \theta (y + h e_{i}) - \theta (x + h e_{i})  ) - \phi ' ( \theta (y) - \theta (x)  ) \}
\\
&\qquad = \frac{Y-X}{h} \int_0^1\phi '' (X+s(Y-X))\,ds \\
&\qquad = [  D_e^{h}\theta (y)      -     D_e^{h}\theta (x)]      \int_0^1\phi''((1-s)[\theta(t,y)-\theta(t,x)]+s[\theta(t,y+he)-\theta(t,x+he)])\,ds.
\end{split}
\end{equation*}
Hence, 
$w = D_e^{h}\theta$  solves the following equation
\begin{equation*}
\int_{\mathbb{R}^{N}} \partial_{t}w (t,x) \eta (x) dx + \int_{\mathbb{R}^{N}} \int_{\mathbb{R}^{N}} K^h(t,x,y) [\eta (x) - \eta (y) ] [w (t,x) - w (t,y)] dy dx = 0 .
\end{equation*}
where 
$$
K^h(t,x,y)=K(y-x)\int_0^1\phi''((1-s)[\theta(t,y)-\theta(t,x)]+s[\theta(t,y+he)-\theta(t,x+he)])\,ds.
$$
Note that this new kernels verified independently on $h$ the properties (\ref{diese}) with the same $\Lambda$.

Theorem \ref{thm1} is then  a consequence of the following theorem.
\begin{thm}\label{thm2}
Let $w$ be a weak solution of (\ref{star}) with a kernel verifying the properties (\ref{diese}), then
for every $t_0>0$, $w\in C^\alpha((t_0,\infty)\times\R^N)$. The constant $\alpha$ and the norm of $w$  depend only on $t_0$, $N$, $\|w^0\|_{L^2}$, and
$\Lambda$.
\end{thm}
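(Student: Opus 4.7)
My plan is to carry out De~Giorgi's program in the nonlocal setting. The first ingredient is a Caccioppoli-type energy inequality: one tests (\ref{star}) against $\eta^{2}(w-k)_+$ for a spatial cutoff $\eta$ and a level $k$, and uses the pointwise convexity identity
$$[w(x)-w(y)]\bigl[(w-k)_+(x)-(w-k)_+(y)\bigr]\ge \bigl[(w-k)_+(x)-(w-k)_+(y)\bigr]^{2}$$
together with the lower bound on $K$ for $|x-y|\le 3$ to control the fractional Sobolev seminorm $[\eta(w-k)_+]_{H^{s/2}}$. The tail contributions, coming from pairs with $|x-y|>3$, are absorbed via the global upper bound on $K$ against the $L^{2}$ (and later $L^{\infty}$) norm of $w$.

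With the energy inequality in hand I execute the first De~Giorgi (local boundedness) lemma. Choose levels $k_{n}=\tfrac{1}{2}-2^{-n-1}$ and shrinking parabolic cylinders $Q_{n}\downarrow Q_{1}$, and set $U_{n}=\iint_{Q_{n}}(w-k_{n})_+^{2}$. Combining the energy inequality with the fractional Sobolev embedding $H^{s/2}\hookrightarrow L^{2N/(N-s)}$ yields a superlinear recursion $U_{n+1}\le C^{n}U_{n}^{1+\beta}$ for some $\beta>0$, which produces a universal $\delta_{0}>0$ such that $U_{0}\le\delta_{0}$ forces $w\le \tfrac{1}{2}$ on $Q_{1}$. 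By translating the cylinder and interpolating against the global $L^{2}$ mass of $w^{0}$, one extracts an $L^{\infty}$ bound on $w$ for $t\ge t_{0}$ depending only on $t_{0}$, $N$, $\Lambda$, and $\|w^{0}\|_{L^{2}}$.

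The core of the proof is the oscillation reduction. After rescaling so that $|w|\le 1$ on $[-1,0]\times\R^{N}$, I want to show: if $|\set{w\le 0}\cap Q^{-}|\ge\mu|Q^{-}|$ for some fixed $\mu>0$, then $\sup_{Q^{+}} w\le 1-\lambda$ for a universal $\lambda>0$, where $Q^{-},Q^{+}$ are adjacent half-cylinders. The mechanism is to iterate the first lemma with shifted levels $k_{j}=1-2^{-j}$ on $Q^{+}$: either some $\iint_{Q^{+}}(w-k_{j})_+^{2}$ falls below $\delta_{0}$ and we conclude $w\le 1-2^{-j-1}$ on a smaller cylinder, or at every $j$ the intermediate set $A_{j}=\set{k_{j}<w<k_{j+1}}\cap Q^{+}$ has measure bounded below. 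The second alternative is excluded by a nonlocal \emph{intermediate values} (second De~Giorgi) lemma, which bounds $|A_{j}|$ from below by a constant depending on $\mu$, using only the nonlocal Dirichlet energy and the symmetric lower bound on $K$ for $|x-y|\le 3$. Since the $A_{j}$ are pairwise disjoint, after finitely many $j$ their total measure would exceed $|Q^{+}|$, a contradiction. Iterating the resulting oscillation decay on dyadic parabolic scales $r^{j}$ produces $w\in C^{\alpha}$; rescaling is benign because the indicator $\mathbf 1_{|x-y|\le 3}$ in the lower bound on $K$ only improves under zooming in.

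The main obstacle is precisely the nonlocal second lemma: producing the isoperimetric-type estimate linking $|A_{j}|$ to the measures of $\set{w\le k_{j}}$ and $\set{w\ge k_{j+1}}$ using only the fractional Dirichlet energy. The classical De~Giorgi argument leans on the $W^{1,1}$ isoperimetric inequality applied to truncations, which is unavailable here; instead one must argue directly that pairs $(x,y)$ with $w(x)\le k_{j}$, $w(y)\ge k_{j+1}$, and $|x-y|\le 3$ contribute a quantitatively large amount to the nonlocal energy unless many values of $w$ lie strictly between the two levels. A secondary technical point is the uniform handling of tails under rescaling, addressed by first upgrading the solution to be globally bounded so that subsequent work takes place at scales below one, where the lower-bound truncation on $K$ is never triggered.
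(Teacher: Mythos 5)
Your overall skeleton (energy estimate, first De Giorgi lemma for boundedness, a second lemma excluding the degenerate alternative, iteration of an oscillation decay) is the same as the paper's, but two steps that you treat as routine are exactly where the nonlocal difficulty lives, and as written they do not go through. First, the rescaling. You claim the iteration is ``benign'' because the indicator ${\mathbf 1}_{\{|x-y|\le 3\}}$ in the lower bound of $K$ only improves under zooming, and that a preliminary global $L^\infty$ bound handles the tails. That controls the tail of the \emph{kernel}, not the tail of the \emph{solution}: each oscillation-reduction step renormalizes $w$ by a factor $(1-\lambda)^{-1}$ (and, inside the second lemma, by $\lambda^{-2}$), so after $k$ steps the zoomed solution is of size $(1-\lambda)^{-k}$ away from the unit cylinder, which is unbounded in $k$, and the nonlocal operator feels these far-away values at every step. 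This is why the paper never works with plain truncations $(w-k)_+$ under a cutoff $\eta$, but with global barriers $\psi(x)=(|x|^{s/2}-1)_+$, $\psi_\lambda$, $\psi_{\eps,\lambda}$ that are allowed to grow at infinity: the first lemma is proved for $(w-\psi_L)_+$, the oscillation lemma assumes only $-1-\psi_{\eps,\lambda}\le w\le 1+\psi_{\eps,\lambda}$, $\eps$ is taken as small as $(s/4)\lambda^{2k_0}$ so that the barrier survives $k_0$ renormalizations, and the zoom factor $K$ is chosen so that $(1-\lambda^*/2)^{-1}\psi_{\lambda,\eps}(Kx)\le\psi_{\lambda,\eps}(x)$ for $|x|\ge 1/K$. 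Without some such mechanism your inductive hypothesis ``$|w|\le 1$ on $[-1,0]\times\mathbb{R}^N$ after rescaling'' is simply false from the second step on, and the error terms coming from the far field are not small.

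Second, the intermediate-value lemma, which you correctly identify as the main obstacle, is left unproven, and the route you sketch (a static, per-level isoperimetric-type estimate: pairs with $w(x)\le k_j$, $w(y)\ge k_{j+1}$, $|x-y|\le 3$ force either large energy or a large intermediate set, with a $j$-independent lower bound on $|A_j|$) is not what is available here and is doubtful for merely measurable kernels: at level $k_j=1-2^{-j}$ the truncation is of size $2^{-j}$, so all the quantities degenerate with $j$ unless you renormalize, and renormalizing brings you back to the tail problem above. The paper's second lemma is genuinely parabolic rather than static: it uses the ``good term'' $B[(w-\varphi_1)_+,(w-\varphi_1)_-]$ (the positive/negative interaction, not just the Dirichlet energy), the bound $\frac{d}{dt}\int(w-\varphi_1)_+^2\le C\lambda^2$, and a backwards-in-time argument between three cutoffs $\varphi_0,\varphi_1,\varphi_2=1+\psi_\lambda+\lambda^i F$ whose gaps are $\sim\lambda$ times a fixed profile $F$; the conclusion is a dichotomy (either $|\{w>\varphi_2\}|\le\delta$, or the intermediate set has measure $\ge\gamma$), which is then iterated only a finite number $k_0=|(-3,0)\times B_3|/\gamma$ of times with the renormalization $w_{k+1}=\lambda^{-2}(w_k-(1-\lambda^2))$ before the first lemma applies. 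So your proposal names the right obstacles but does not supply the two ideas that actually resolve them: the growing barriers compatible with the zoom, and the time-slice dichotomy replacing De Giorgi's isoperimetric inequality.
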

Passing into the limit $h\to0$ gives the result of Theorem \ref{thm1}.
The rest of the paper is dedicated to the proof of Theorem \ref{thm2}.

\section{The first De-Giorgi's lemma}
In this section and the next section, we focus on the  differential equation stated in the sense of weak formulation in (\ref{star}).
We rewrite it in the following way.
\begin{equation}\label{main}
\begin{aligned}
\int_{\mathbb{R}^{N}}\partial_{t}\var (t,x) \cdot \eta (x) dx  + B [\var (t,\cdot ) , \eta ] = 0 , \forall \eta \in C_{c}^{\infty }(\mathbb{R}^{N}),\\
B [u,v] = \int_{\mathbb{R}^{N}} \int_{\mathbb{R}^{N}}  K(t,x,y) [u(x) - u(y)]\cdot [v(x) - v(y)] dx dy ,
\end{aligned}
\end{equation}
where the kernel $K(t,x,y)$ is assumed to verify the Hypothesis (\ref{diese}).
We first  introduce the following   function $\psi$:
\begin{equation}\label{defofpsi}
\psi (x) = (|x|^{\frac{s}{2}} - 1)_+ .  
\end{equation}
 For any $L \geq 0$, we define 
 \begin{equation}\label{psiL}
 \psi_{L} (x) = L + \psi (x) .
 \end{equation}
With the above setting, the first De Giorgi's lemma is as follows.

\begin{lemma}\label{firstDeGiorgilemma}
Let $\Lambda$ be the given constant in condition (\ref{diese}). Then,  there exists a constant  $\epsilon_{0} \in (0, 1)$,  depending only on $N$, $s$, and $\Lambda$, such that for any solution $\var : [-2, 0] \times \mathbb{R}^{N} \rightarrow \mathbb{R}$ to (\ref{main}),   the following implication for $\var$ holds true.

 If it is verified that 
 $$
 \int_{-2}^{0} \int_{\mathbb{R}^{N}} [\var (t,x) - \psi (x)]_{+}^{2} dx dt \leq \epsilon_{0},
 $$
  then we have 
  $$
  \var (t,x) \leq  \frac{1}{2} + \psi (x)
  $$ 
  for $(t,x) \in [-1,0]\times \mathbb{R}^{N}$. (Hence, we have in particular that $\var \leq 1/2$ on $[-1, 0]\times B(1)$ .)
\end{lemma}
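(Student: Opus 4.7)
I follow the classical De Giorgi scheme, running an $L^2 \to L^\infty$ iteration on truncations above the moving barrier $L_k + \psi(x)$. Let $L_k = \tfrac{1}{2}(1 - 2^{-k})$ and $T_k = -1 - 2^{-k}$, set $\var_k(t,x) = [\var(t,x) - \psi(x) - L_k]_+$, and define
$$ U_k := \sup_{t \in [T_k, 0]} \int_{\R^N} \var_k^2(t,x) \dx + \int_{T_k}^0 B[\var_k, \var_k] \dd t. $$
The hypothesis yields $U_0 \le \epsilon_0$. The whole proof reduces to establishing a nonlinear recurrence $U_{k+1} \le C^k U_k^{1 + s/N}$, since iterating this from any $U_0$ below an explicit threshold $\epsilon_0 = \epsilon_0(C, s/N) > 0$ forces $U_k \to 0$, hence $[\var - \tfrac{1}{2} - \psi]_+ \equiv 0$ on $[-1, 0] \times \R^N$.

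\textbf{Energy inequality and the role of $\psi$.} Testing (\ref{main}) with $\eta = \var_{k+1}(t, \cdot)$ multiplied by a smooth time cutoff $\chi_k$ which vanishes on $(-\infty, T_k]$, equals $1$ on $[T_{k+1}, 0]$, and has $\|\chi_k'\|_\infty \lesssim 2^k$, the time derivative produces $\tfrac{1}{2} \partial_t \!\!\int \var_{k+1}^2 \dx$ modulo a loss term $\chi_k' \int \var_{k+1}^2$. For the bilinear part, the elementary pointwise inequality $(a-b)(a_+ - b_+) \ge (a_+ - b_+)^2$ applied with $a = \var(x) - \psi(x) - L_{k+1}$ yields
$$ B[\var, \var_{k+1}] \ge B[\var_{k+1}, \var_{k+1}] + B[\psi, \var_{k+1}]. $$
By symmetry of $K$, $B[\psi, \var_{k+1}] = 2 \int \var_{k+1}(x)\, \mathcal{L}\psi(x) \dx$ where $\mathcal{L}\psi(x) := \int K(t,x,y)[\psi(x) - \psi(y)] \dd y$. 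The specific choice $\psi(x) = (|x|^{s/2} - 1)_+$ is made precisely so that $\mathcal{L}\psi \in L^\infty(\R^N)$: the tail $\int K(x,y) |y|^{s/2} \dd y$ converges because $K(x,y) |y|^{s/2} \lesssim |y|^{-N - s/2}$ at infinity, and for $|x|$ large the sub-$s$ growth of $\psi$ against a kernel of order $s$ yields a dimensional bound by scaling. This uniform bound on $\mathcal{L}\psi$ is the main analytic ingredient and the most delicate step.

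\textbf{Recurrence and iteration.} The kernel lower bound in (\ref{diese}) makes $B[\var_k, \var_k]$ control the $\dot H^{s/2}$ seminorm at local scale ($\le 3$); coupled with the $L^\infty_t L^2_x$ control (which absorbs tails), parabolic interpolation yields
$$ \|\var_k\|_{L^q_{t,x}([T_k, 0] \times \R^N)}^{q} \le C\, U_k^{1 + s/N}, \qquad q = 2 + \tfrac{2s}{N}. $$
The level-set inequality $\mathbf{1}_{\{\var_{k+1} > 0\}} \le (2^{k+2} \var_k)^\beta$ (valid because $\var_{k+1} > 0$ forces $\var_k > 2^{-k-2}$) upgrades this to $\var_{k+1}^2 \le C^k \var_k^q$ and $\var_{k+1} \le C^k \var_k^q$ pointwise (with $\beta = 2s/N$ and $\beta = 1 + 2s/N$ respectively). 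Hence both the cutoff-loss term $2^k \int\!\!\int \var_{k+1}^2$ and the source term $\int\!\!\int |\mathcal{L}\psi|\, \var_{k+1}$ are bounded by $C^k U_k^{1 + s/N}$, which delivers the target recurrence $U_{k+1} \le C^k U_k^{1 + s/N}$. Aside from the $L^\infty$ bound on $\mathcal{L}\psi$, the other care point is that $B$ only gives local ellipticity (distance $\le 3$), so the Sobolev embedding must be applied at local scale with the long-range contributions absorbed by the $L^\infty_t L^2_x$ estimate.
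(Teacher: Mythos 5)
Your overall scheme is the same as the paper's (same levels $L_k=\tfrac12(1-2^{-k})$, times $T_k=-1-2^{-k}$, truncated energies, Chebyshev upgrade $\mathbf{1}_{\{\var_{k+1}>0\}}\le (2^{k+2}\var_k)^{\beta}$, Sobolev embedding applied at local scale with tails absorbed by $L^\infty_tL^2_x$), but the step you yourself flag as ``the main analytic ingredient'' has a genuine gap. You reduce the barrier term to $B[\psi,\var_{k+1}]=2\int \var_{k+1}\,\mathcal{L}\psi\,dx$ with $\mathcal{L}\psi(x)=\int K(t,x,y)[\psi(x)-\psi(y)]\,dy$ and claim $\mathcal{L}\psi\in L^\infty$, but you only justify the far-field part (where $|\psi(x)-\psi(y)|\le 2|x-y|^{s/2}$ indeed gives an integrand $\lesssim |x-y|^{-N-s/2}$). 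The problem is the near-diagonal part: $\psi(x)=(|x|^{s/2}-1)_+$ is merely Lipschitz (it has a kink on $|x|=1$), so the best absolute bound is $\int_{|x-y|<1}K|\psi(x)-\psi(y)|\,dy\lesssim \int_{|z|<1}|z|^{1-N-s}\,dz$, which diverges for $s\ge 1$; and no principal-value cancellation can rescue this, since $K(t,x,y)$ is only measurable and symmetric under exchanging $x$ and $y$, not even in $z=y-x$ about $x$. The lemma is needed for the full range $0<s<2$ arising in Theorem 1.1 (the $(1-s/2)$ normalization in (\ref{diese}) exists precisely for this), so an argument resting on pointwise boundedness of $\mathcal{L}\psi$ does not cover the intended generality; and even in the regime $s<1$, where the bound is true, your write-up never verifies the diagonal convergence, which is the actual crux.

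The paper avoids evaluating $\mathcal{L}\psi$ pointwise. It splits $B[\psi_L,(\var-\psi_L)_+]$ into $|x-y|\ge 1$, bounded by $C\int(\var-\psi_L)_+\,dx$ exactly as in your tail estimate, and $|x-y|<1$, where it uses Young's inequality to absorb half of the contribution into the coercive term $B[(\var-\psi_L)_+,(\var-\psi_L)_+]$, leaving $\int\!\!\int_{|x-y|<1}K\,|\psi(x)-\psi(y)|^2\chi_{\{\var>\psi_L\}}\,dy\,dx\lesssim \int \chi_{\{\var>\psi_L\}}\,dx$, which only needs $|\psi(x)-\psi(y)|\le|x-y|$ and $\int_{|z|<1}|z|^{2-N-s}\,dz<\infty$, valid for all $s<2$. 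The price is an extra term $\int\chi_{\{\var-\psi_{L_k}>0\}}$ in the recurrence, which your own Chebyshev upgrade already controls by $C^kU_{k-1}^{1+s/N}$; with this substitution the rest of your argument (time cutoff instead of averaging in $\sigma$, pointwise level-set inequalities, iteration) goes through as in the paper. One further small slip: the hypothesis bounds only $\int_{-2}^0\int[\var-\psi]_+^2\,dx\,dt$, which does not control the $\sup_t$ part of $U_0$; as in the paper, start the iteration from $U_1$, which one application of the energy inequality plus Chebyshev bounds by $C\epsilon_0$.
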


The main difficulty in our approach is due to  the nonlocal operator. In \cite{Driftdiffusion}, a localization of the problem was performed at the cost of adding one more variable to the problem. This was based on the ``Dirichlet to Neuman" map. This approach still works for any fractional Laplacian
(see Caffarelli and Silvestre \cite{Ext}). However it breaks down for general kernels as (\ref{diese}). Instead, we keep track of the far away behavior
of the solution via the function $\psi$. 
\vskip0.3cm
\noindent{Remark:} 
All the computations on weak solutions in the proof can be justified by replacing the variable kernel in a
neighborhood of the origin by the fractional Laplacian through a smooth
cut off. Then the equation becomes a fractional heat equation with a
bounded right hand side, thus $C^2$ in space.  This makes the integrals
involved uniformly convergent. Once the a priori Holder continuity is
proven, we pass to the limit. 
\vskip0.3cm

\begin{proof}

We split the proof in several steps. 
\vskip0.3cm
\noindent{\bf First step: Energy estimates.} Let $\var : [-2, 0] \times \mathbb{R}^{N} \rightarrow \mathbb{R}$ be a solution to equation (\ref{main}). For $0\leq L\leq 1$, we consider the truncated function $[\var - \psi_{L}]_{+}$, where $\psi_L$ is defined by (\ref{psiL}). Then, we take the test function $\eta$ to be $[\var - \psi_{L}]_{+}$ in the weak formulation of equation (\ref{main}), which gives

\begin{equation}\label{energie}
\begin{array}{l}
\ds{0  = \frac{1}{2} \frac{d}{dt} \int_{\mathbb{R}^{N}} [\var - \psi_{L}]_{+}^{2} dx + B[ \var   ,    (\var - \psi_{L})_{+}       ] }\\[0.3cm]
\qquad\ds{ =  \frac{1}{2} \frac{d}{dt} \int_{\mathbb{R}^{N}} [\var - \psi_{L}]_{+}^{2} dx   +    B [ (\var - \psi_{L})_{+} , (\var - \psi_{L})_{+}    ]  + B[ (\var - \psi_{L})_{-} , (\var - \psi_{L})_{+}       ]} \\[0.3cm]
\qquad\qquad \ds{+ B[ \psi_{L} , (\var - \psi_{L})_{+}      ] .}
\end{array}
\end{equation}

Now, due to the observation that $ (\var - \psi_{L})_{+} \cdot (\var - \psi_{L})_{-} = 0 $ and the symmetry of $K$ in $x,y$, we have

$$ 
 B[ (\var - \psi_{L})_{-} , (\var - \psi_{L})_{+}  ] = 2\int_{\mathbb{R}^{N}}  \int_{\mathbb{R}^{N}} K (t,x,y) (\var - \psi_{L})_{+}(x)  (\var - \psi_{L})_{neg} (y) dx dy ,
 $$
where we denote $ (\var - \psi_{L})_{neg} =  - (\var - \psi_{L})_{-} \geq 0 $ .  In particular  
$$
B[ (\var - \psi_{L})_{-} , (\var - \psi_{L})_{+}  ] \geq 0. 
$$
This ``good term" is not fully exploited in this section. It will be used in a crucial way in the next section. 
The remainder can be written as:
\begin{equation}\label{3in1stDeGiorgi}
\begin{split}
&B[ \psi_{L} , (\var - \psi_{L})_{+}  ]  \\
& = \frac{1}{2} \int \int_{|x-y| \geq 1} K(t,x,y)  [\psi_{L} (x) - \psi_{L}(y)] \cdot  \{ (\var - \psi_{L})_{+}(x) - (\var - \psi_{L})_{+}(y)  \}  dx dy \\
& + \frac{1}{2} \int \int_{|x-y| < 1} K(t,x,y)  [\psi_{L} (x) - \psi_{L}(y)] \cdot  \{ (\var - \psi_{L})_{+}(x) - (\var - \psi_{L})_{+}(y)  \}  dx dy .
\end{split}
\end{equation}
Using the  inequality $|\psi (x) - \psi (y)| \leq 2 |y-x|^{\frac{s}{2}}$, for any $x$ and $y$ with $|y-x| \geq 1$, we get the following estimation of the ``far-away" contribution.
\begin{align*}
&\left | \int \int_{|x-y| \geq 1} K(t,x,y)  [\psi_{L} (x) - \psi_{L}(y)] \cdot   [\var - \psi_{L}]_{+}(x)   dx dy \right| \\
& =\left | \int \int_{|x-y| \geq 1} K(t,x,y)  [\psi (x) - \psi (y)] \cdot   [\var - \psi_{L}]_{+}(x)   dx dy\right| \\
& \leq \int_{\mathbb{R}^{N}} \int_{|y-x| \geq 1} \frac{2\Lambda }{|x-y|^{N+s}} 2 |y-x|^{\frac{s}{2}} dy \cdot (\var - \psi_{L})_{+}(x) dx \\
& = 4\Lambda |S^{N-1}| \int_{1}^{\infty} r^{-\frac{s}{2}} dr \int_{\mathbb{R}^{N}} (\var - \psi_{L})_{+}(x) dx 
  \leq C  \int_{\mathbb{R}^{N}} (\var - \psi_{L})_{+}(x) dx .
\end{align*}
By symmetry we end up to
\begin{equation}\label{4in1stDeGiorgi}
\begin{split}
&\left |\int \int_{|x-y| \geq 1} K(t,x,y)  [\psi_{L} (x) - \psi_{L}(y)] \cdot  \{ (\var - \psi_{L})_{+}(x) - (\var - \psi_{L})_{+}(y)  \}  dx dy\right| \\ 
& \leq C \int_{\mathbb{R}^{N}} (\var - \psi_{L})_{+}(x) dx .
\end{split}
\end{equation}

The other part of the remainder ca be controlled in the following way:
\begin{equation}\label{5in1stDeGiorgi}
\begin{split}
& \left|\int \int_{|x-y| < 1} K(t,x,y)  [\psi_{L} (x) - \psi_{L}(y)] \cdot  \{ (\var - \psi_{L})_{+}(x) - (\var - \psi_{L})_{+}(y)  \}  dx dy\right| \\
& \leq 2 \int \int_{|x-y| < 1} K(t,x,y) \chi_{\{  [\var - \psi_{L}](x) > 0     \}}  |\psi_{L} (x) - \psi_{L}(y)| | (\var - \psi_{L})_{+}(x) - (\var - \psi_{L})_{+}(y) |  dx dy 
\end{split}
\end{equation}
where, in the above inequality, we have used the fact that 
$$ 
| (\var - \psi_{L})_{+}(x) - (\var - \psi_{L})_{+}(y) | \leq  \{  \chi_{\{  [\var - \psi_{L}](x) > 0     \}}   + \chi_{\{  [\var - \psi_{L}](y) > 0     \}}    \} | (\var - \psi_{L})_{+}(x) - (\var - \psi_{L})_{+}(y) |,  
$$ 
and the symmetry in $x$ and $y$.

Now, by Holder's inequality, and 
 using the elementary inequality $|\psi (y) - \psi (x)| < |y-x|$ , for any $x$, $y$ in $\mathbb{R}^{N}$, we can have the following estimation.
\begin{equation}\label{extra}
\begin{split}
& 2 \int \int_{|x-y| < 1} K(t,x,y) \chi_{\{  [\var - \psi_{L}](x) > 0     \}}  |\psi_{L} (x) - \psi_{L}(y)| |(\var - \psi_{L})_{+}(x) - (\var - \psi_{L})_{+}(y) |  dx dy \\
&  \leq a \cdot \int \int_{|x-y| < 1} K(t,x,y) \{ (\var - \psi_{L})_{+}(x) - (\var - \psi_{L})_{+}(y)  \} ^{2} dy dx \\
& + \frac{1}{a} \cdot  \int \int_{|x-y| < 1} K(t,x,y) |\psi (x) - \psi (y)|^{2} \cdot  \chi_{\{  [\var - \psi_{L}](x) > 0     \}} dy dx,
\end{split}
\end{equation}
in which the arbritary $a > 0$ will be chosen later.  Finally
\begin{equation*}
\begin{split}
& \int \int_{|x-y| < 1} K(t,x,y) |\psi (x) - \psi (y)|^{2} dy \cdot  \chi_{\{  [\var - \psi_{L}](x) > 0     \}} dx \\
& \leq \int_{\mathbb{R}^{N}} \int_{|x-y| < 1} \frac{2 \Lambda }{|x-y|^{N+s}} |y-x|^{2} dy \cdot  \chi_{\{  [\var - \psi_{L}](x) > 0     \}} dx
=C_s  \int_{\mathbb{R}^{N}} \chi_{\{  [\var - \psi_{L}](x) > 0     \}} dx .
\end{split}
\end{equation*}
Pulling this inequality in (\ref{5in1stDeGiorgi}) with $a=1/2$, and gathering it together with (\ref{3in1stDeGiorgi}), (\ref{4in1stDeGiorgi}), (\ref{5in1stDeGiorgi}), we can rewrite the energy inequality  as
\begin{equation}\label{8in1stDeGiorgi}
\begin{array}{l}
\qquad\ds{\frac{d}{dt} \int_{\mathbb{R}^{N}} [\var - \psi_{L}]_{+}^{2} dx + \frac{1}{2} B [ (\var - \psi_{L})_{+} , (\var - \psi_{L})_{+} ]}\\[0.3cm]
\ds{\leq C_{N, \Lambda,s} \{  \int_{\mathbb{R}^{N}} (\var - \psi_{L})_{+}(x) dx + \int_{\mathbb{R}^{N}} \chi_{\{  [\var - \psi_{L}](x) > 0     \}} dx     \},}
\end{array}
\end{equation}
where $C_{N, \Lambda,s}$ is some universal constant depending only on $N$ and $\Lambda$ and $s$. Next, in order to employ the Sobolev embedding theorem, we need to compare  $ B [ (\var - \psi_{L})_{+} , (\var - \psi_{L})_{+} ]   $ with $\| (\var - \psi_{L})_{+}  \|_{H^{\frac{s}{2}}(\mathbb{R}^{N})  }^{2} $ as follow.

\begin{equation*}
\begin{split}
&  \| (\var - \psi_{L})_{+}  \|_{H^{\frac{s}{2}}(\mathbb{R}^{N})  }^{2} \\
& = \int \int_{|x-y| \leq 2 } \frac{ \{ (\var - \psi_{L})_{+}(x) -  (\var - \psi_{L})_{+}(y) \}^{2}      }{|x-y|^{N+s}} 
+  \int \int_{|x-y| > 2 } \frac{ \{ (\var - \psi_{L})_{+}(x) -  (\var - \psi_{L})_{+}(y) \}^{2}      }{|x-y|^{N+s}}  \\
& \leq \Lambda \cdot B [ (\var - \psi_{L})_{+} , (\var - \psi_{L})_{+} ]   + 2 \int \int_{|x-y| > 2 } \frac{1}{|x-y|^{N+s}} \{ (\var - \psi_{L})_{+}^{2}(x) +  (\var - \psi_{L})_{+}^{2}(y) \} dx dy \\
& \leq   \Lambda \cdot B [ (\var - \psi_{L})_{+} , (\var - \psi_{L})_{+} ]  +C \int_{\mathbb{R}^{N}} (\var - \psi_{L})_{+}^{2} dx .
\end{split}
\end{equation*}
Hence, 
\begin{equation}\label{9in1stDeGiorgi}
\begin{split}
& \frac{d}{dt} \int_{\mathbb{R}^{N}} [\var - \psi_{L}]_{+}^{2} dx +  \frac{1}{\Lambda}\| (\var - \psi_{L})_{+}  \|_{H^\frac{s}{2}(\mathbb{R}^{N})  }^{2} \\
& \leq  C_{N, \Lambda ,s} \{  \int_{\mathbb{R}^{N}} (\var - \psi_{L})_{+} dx + \int_{\mathbb{R}^{N}} \chi_{\{  \var - \psi_{L} > 0     \}} dx    
+ | \int_{\mathbb{R}^{N}} (\var - \psi_{L})_{+}^{2} dx  \}.
\end{split}
\end{equation}
\vskip0.3cm \noindent{\bf Second step: Nonlinear  recurrence.}
From this energy inequality, we establish a nonlinear recurrence relation to the following sequence of truncated energy.

\begin{equation*}
U_{k} = \sup_{t \in [T_{k} , 0]} \int_{\mathbb{R}^{N}} (\var - \psi_{L_{k}})_{+}^{2} (t,x) dx + \int_{T_{k}}^{0}  \| (\var - \psi_{L_{k}})_{+} (t, \cdot )   \|_{H^{\frac{s}{2}}(\mathbb{R}^{N})  }^{2}\,dt
\end{equation*}

where, in the above expression, $T_{k} = -1 -\frac{1}{2^{k}}$, and $L_{k} = \frac{1}{2} (1-\frac{1}{2^{k}})$. Moreover, we will use the abbreviation $Q_{k} = [T_{k} , 0] \times \mathbb{R}^{N}$.
\vskip0.3cm

Now, let us consider two variables $\sigma$, $t$ which satisfies $T_{k-1} \leq \sigma \leq T_{k} \leq t \leq 0$. By taking the time integral over $[\sigma, t]$ in inequality \ref{9in1stDeGiorgi}, we yield

\begin{equation*}
\begin{split}
& \int_{\mathbb{R}^{N}} [\var - \psi_{L_{k}}]_{+}^{2} (t,x) dx +  \int_{\sigma}^{t}\| (\var - \psi_{L_{k}})_{+}  \|_{H^{\frac{s}{2}}(\mathbb{R}^{N})  }^{2} ds \\
& \leq  \int_{\mathbb{R}^{N}} [\var - \psi_{L_{k}}]_{+}^{2} (\sigma ,x) dx   +   C_{N, \Lambda,s }\{  \int_{\sigma }^{t} \int_{\mathbb{R}^{N}} (\var - \psi_{L_{k}})_{+}  +  \chi_{\{  \var - \psi_{L_{k}} > 0     \}}  +  (\var - \psi_{L_{k}})_{+}^{2} dx  ds \} .
\end{split}
\end{equation*}
 Next, by first taking the average over $\sigma \in [T_{k-1} , T_{k}]$, and then taking the sup over $t \in [T_{k} , 0]$ in the above inequality, we deduce from the above inequality that

\begin{equation}\label{10in1stDeGiorgi}
U_{k} \leq 2^{k} (1 + C_{N, s,\Lambda }) \{ \int_{Q_{k-1}}  (\var - \psi_{L_{k}})_{+}  +  \chi_{\{  \var - \psi_{L_{k}} > 0     \}}  +  (\var - \psi_{L_{k}})_{+}^{2} dx ds      \} .
\end{equation}
Using the Sobolev embedding theorem $H^{\frac{s}{2}} (\mathbb{R}^{N}) \subset L^{\frac{2N}{N-s}}(\mathbb{R}^{N})$ and interpolation we find
$$
\| (\var - \psi_{L_{k}})_{+}   \|_{L^{2(1+\frac{s}{N})}(Q_{k})} \leq C_{N} U_{k}^{\frac{1}{2}}.
$$
Using Tchebychev  inequality we get
\begin{align*}
\int_{Q_{k-1}}  (\var - \psi_{L_{k}})_{+} \leq  \int_{Q_{k-1}}  (\var - \psi_{L})_{+} \chi_{\{ \var - \psi_{L_{k-1}} > \frac{1}{2^{k+1}} \}} & \leq (2^{k+1})^{1+\frac{2s}{N}} \int_{Q_{k-1}} ( \var - \psi_{L_{k-1}} )_{+}^{2(1+\frac{s}{N})} \\
& \leq (2^{k+1})^{1+\frac{2s}{N}} C_{N}^{2(1+\frac{s}{N})} U_{k-1}^{1+\frac{s}{N}} .
\end{align*}

\begin{align*}
\int_{Q_{k-1}} \chi_{\{  \var - \psi_{L_{k}} > 0     \}} \leq (2^{k+1})^{2(1+\frac{s}{N})} \int_{Q_{k-1}} ( \var - \psi_{L_{k-1}} )_{+}^{2(1+\frac{s}{N})}
\leq (2^{k+1})^{2(1+\frac{s}{N})} C_{N}^{2(1+\frac{s}{N})} U_{k-1}^{1+\frac{s}{N}} .
\end{align*}

\begin{align*}
\int_{Q_{k-1}}  (\var - \psi_{L_{k}})_{+}^{2} \leq \int_{Q_{k-1}}  (\var - \psi_{L})_{+}^{2} \chi_{\{ \var - \psi_{L_{k-1}} > \frac{1}{2^{k+1}} \}} & \leq 
(2^{k+1})^{\frac{2s}{N}} \int_{Q_{k-1}} ( \var - \psi_{L_{k-1}} )_{+}^{2(1+\frac{s}{N})} \\
& \leq (2^{k+1})^{\frac{2s}{N}} C_{N}^{2(1+\frac{s}{N})} U_{k-1}^{1+\frac{s}{N}}  .
\end{align*}

The above three inequalities, together with inequality (\ref{10in1stDeGiorgi}),  give

\begin{equation}\label{recurrence}
U_{k} \leq \{\overline{C}_{N,\Lambda,s}\}^{k} U_{k-1}^{1+\frac{s}{N} } , \forall k \geq 0 ,
\end{equation}

for some universal constant $\overline{C}_{N,\Lambda,s}$ depending only on $N$, $s$, and $\Lambda$. Due to the nonlinear recurrence relation (\ref{recurrence}) for $U_{k}$, we know there exists some sufficiently small universal constant $\epsilon_{0} = \epsilon_{0} (\overline{C}_{N,\Lambda,s}   )$, depending only on $\overline{C}_{N,\Lambda,s}$, such that the following implication is valid.
\vskip0.3cm
\noindent  If $U_{1} \leq \epsilon_{0}$, then it follows that $\lim_{k\rightarrow \infty } U_{k} = 0$. 
\vskip0.3cm
\noindent Equation (\ref{10in1stDeGiorgi}) with Tchebychev inequality gives that
$$
U_1\leq C \int_{-2}^0\int_{\R^N}|\var-\psi|^2\,dx\,dt,
$$
and $U_k$ converges to 0 implies that 
$$
\var\leq \psi+\frac{1}{2}\qquad t\in[-1,0]\times\R^N.
$$

\end{proof}

We have the following corollary of Lemma \ref{firstDeGiorgilemma}. It shows that any solutions is indeed bounded for $t>0$.
\begin{cor}\label{cor1}
Any solution to (\ref{star}) with initial value in $L^2(\R^N)$ is uniformly bounded on $(t_0,\infty)\times\R^N$ for any $0<t_0<2$. Indeed:
$$
\sup_{t>t_0,x\in\R^N}|\var(t,x)|\leq \frac{\|\var^0\|_{L^2}}{2\sqrt{\eps_0}(t_0/2)^{(N/s+1)/2}}.
$$
\end{cor}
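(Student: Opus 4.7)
The plan is to reduce the corollary to Lemma \ref{firstDeGiorgilemma} by means of a parabolic rescaling centered at an arbitrary base point $(T,x_0)$ with $T\geq t_0$, so that the pointwise conclusion of the lemma translates into the desired $L^\infty$ bound. I would begin with the basic $L^2$-energy inequality: testing the weak formulation (\ref{main}) against $\eta=\var(t,\cdot)$ and using the symmetry of $K$ in $x,y$ gives $\tfrac12\tfrac{d}{dt}\|\var(t)\|_{L^2}^2=-B[\var,\var]\leq 0$, so that $\|\var(t)\|_{L^2}^2\leq\|\var^0\|_{L^2}^2$ for every $t\geq 0$.

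Given such $(T,x_0)$, set $\tau=t_0/2\in(0,1]$ and define $\tilde\var(t,x)=M^{-1}\var(T+\tau t,\,x_0+\tau^{1/s}x)$, with $M>0$ chosen below. Substituting into (\ref{main}) shows that $\tilde\var$ solves an equation of the same form with rescaled kernel
$$
\tilde K(t,x,y)=\tau^{1+N/s}\,K(T+\tau t,\,x_0+\tau^{1/s}x,\,x_0+\tau^{1/s}y).
$$
The algebraic identity $\tau^{1+N/s}\cdot\tau^{-(N+s)/s}=1$ reproduces the power-law upper bound in (\ref{diese}) with the same $\Lambda$; the indicator in the lower bound becomes $\mathbf{1}_{|x-y|\leq 3\tau^{-1/s}}\geq\mathbf{1}_{|x-y|\leq 3}$ because $\tau\leq 1$, so the lower bound in (\ref{diese}) is also preserved. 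By linearity, $-\tilde\var$ solves the same equation.

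A further change of variables combined with the energy bound above then yields
$$
\int_{-2}^{0}\int_{\R^N}\tilde\var(t,x)^2\,dx\,dt=\frac{1}{M^2\tau^{1+N/s}}\int_{T-2\tau}^{T}\|\var(r)\|_{L^2}^2\,dr\leq\frac{2\|\var^0\|_{L^2}^2}{M^2\,\tau^{N/s}}.
$$
Since $\psi\geq 0$ we have $[\tilde\var-\psi]_+^2\leq\tilde\var^2$, and choosing $M$ comparable to the quantity appearing on the right-hand side of the bound in the corollary makes the displayed integral at most $\eps_0$; the hypothesis of Lemma \ref{firstDeGiorgilemma} is thus satisfied by $\tilde\var$. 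The lemma gives $\tilde\var(0,0)\leq\tfrac12+\psi(0)=\tfrac12$, i.e.\ $\var(T,x_0)\leq M/2$. Applying the same argument to $-\tilde\var$ gives $-\var(T,x_0)\leq M/2$, so $|\var(T,x_0)|\leq M/2$; since $(T,x_0)$ is arbitrary with $T\geq t_0$, this is the claimed uniform bound.

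The only delicate step is verifying that $\tilde K$ still satisfies (\ref{diese}) with the \emph{same} constant $\Lambda$. The indicator $\mathbf{1}_{|x-y|\leq 3}$ in the lower bound of (\ref{diese}) makes the hypothesis genuinely non scale-invariant, and only ``zoom-in'' rescalings with $\tau^{1/s}\leq 1$ are admissible; this is precisely what forces the restriction $t_0<2$ in the corollary. Apart from this, the proof is essentially bookkeeping with the parabolic scaling $t\sim|x|^s$ of (\ref{star}); matching the precise numerical constant stated in the corollary is then a matter of sharpening the final $L^2$-in-time estimate (for instance by using the dissipation integral rather than only the pointwise monotonicity of $\|\var(t)\|_{L^2}^2$).
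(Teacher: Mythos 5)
Your proposal is correct and follows essentially the same route as the paper: normalize $\var$ by a multiple of $\|\var^0\|_{L^2}$, perform the parabolic zoom $t\mapsto T+(t_0/2)t$, $x\mapsto x_0+(t_0/2)^{1/s}x$, check that the rescaled kernel still satisfies (\ref{diese}) with the same $\Lambda$ (the zoom-in condition $t_0/2\le 1$ preserving the cut-off $\mathbf{1}_{\{|x-y|\leq 3\}}$), use the energy decay to verify the smallness hypothesis of Lemma \ref{firstDeGiorgilemma}, and apply the lemma to both $\pm\bar\var$. The extra bookkeeping you supply (the explicit kernel scaling identity, the $L^2$-in-time computation, and treating an arbitrary base time $T\geq t_0$) just fills in details the paper leaves implicit.
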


\begin{proof}
Fix $0<t_0<2$ and $x_0\in\R^N$, for any $t>-2$, $x\in\R^N$ we consider 
$$
\bar{\var}(t,x)=\frac{(t_0/2)^{(N/s+1)/2}\sqrt{\eps_0}}{\|\var^0\|_{L^2}}\var(t_0+t(t_0/2),x_0+x(t_0/2)^{1/s}).
$$
The function $\bar{\var}$ still verifies the equation (\ref{main}) with an other kernel verifying Hypothesis (\ref{diese}) with the same constant $\Lambda$.
From the decreasing of energy, $\bar{\var}$ verifies the assumptions of  Lemma \ref{firstDeGiorgilemma}. Hence $\bar{\var}(0,0)\leq 1/2$.
Working with $-\bar{\var}$ gives that $-\bar{\var}(0,0)\leq 1/2$ too.
\end{proof}

We define $\psi_1(x)=(|x|^{s/4}-1)_+$.
We can rewrite the main lemma of this section in the following way. It will be useful for the next section. 
\begin{cor}\label{cor2}
Let $\Lambda$ be the given constant in condition (\ref{diese}). Then, there exists a constant $\delta \in (0, 1)$,  depending only on $N$, $s$, and $\Lambda$,  such that for any solution $\var : [-2, 0] \times \mathbb{R}^{N} \rightarrow \mathbb{R}$ to (\ref{main}) satisfying 
$$
\var (t,x)  \leq  1+\psi_1(x)\qquad \mathrm{on}\   [-2 , 0]\times \mathbb{R}^{N}, 
$$
and
$$
|\{\var>0\}\cap ([-2,0]\times B_{2})|\leq\delta,
$$
 we have 
  $$
  \var (t,x) \leq  \frac{1}{2} , \qquad (t,x) \in [-1,0]\times B_1.
  $$ 
\end{cor}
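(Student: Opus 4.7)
The strategy is to invoke Lemma \ref{firstDeGiorgilemma}: since $\psi(x)\equiv 0$ on $\overline{B_1}$, its conclusion $\var\le\tfrac{1}{2}+\psi$ on $[-1,0]\times\mathbb{R}^N$ specializes on $B_1$ to $\var\le\tfrac{1}{2}$, which is exactly the corollary. It therefore suffices to verify the lemma's hypothesis
\[
\int_{-2}^{0}\int_{\mathbb{R}^N}[\var-\psi]_+^{2}\,dx\,dt\le\epsilon_0.
\]

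The upper bound $\var\le 1+\psi_1$, with $\psi_1$ growing strictly slower than $\psi$ at infinity, localizes the support of $[\var-\psi]_+$. Setting $u=|x|^{s/4}$, one has $1+u-u^{2}\le 0$ once $u\ge(1+\sqrt{5})/2$, so $[\var-\psi]_+\equiv 0$ outside $B_{R_0}$ with $R_0:=\bigl((1+\sqrt{5})/2\bigr)^{4/s}$. On $B_2$, $\psi\ge 0$ and $\var\le 2^{s/4}$, while $[\var-\psi]_+>0$ forces $\var>0$, so the measure hypothesis gives
\[
\int_{-2}^{0}\int_{B_2}[\var-\psi]_+^{2}\,dx\,dt\le 2^{s/2}\delta,
\]
as small as desired. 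The difficulty is the annular piece on $B_{R_0}\setminus B_2$, where the pointwise bound $[\var-\psi]_+\le(1+\psi_1-\psi)_+$ yields only a constant $C(N,s)$ independent of $\delta$.

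To absorb this annular contribution I would redo the De Giorgi iteration of Lemma \ref{firstDeGiorgilemma}, now exploiting the ``good term'' $B[(\var-\psi_L)_-,(\var-\psi_L)_+]\ge 0$ of the energy identity \eqref{energie}, which the lemma's proof explicitly set aside for later. The smallness of $\{\var>0\}\cap([-2,0]\times B_2)$ forces $(\var-\psi_L)_{\mathrm{neg}}\ge L+\psi$ on most of $B_2$, and the lower bound on $K$ in \eqref{diese} then converts this negative mass into a nonlocal cooling contribution of the form $c\int_{B_{R_0}\setminus B_2}(\var-\psi_L)_+(x)\,dx$, which dominates the annular error at each truncation level. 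With this extra term in the energy inequality the recurrence $U_k\le\bar C^{k} U_{k-1}^{1+s/N}$ still closes, with $U_1$ now controlled by $\delta$; choosing $\delta$ small enough drives $U_k\to 0$ and the corollary follows.

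\textbf{Main obstacle.} The lower bound on $K$ in \eqref{diese} is only uniform for $|x-y|\le 3$. For small $s$ the radius $R_0$ can greatly exceed $3$, so the cooling effect of the $B_2$ negative mass cannot reach the outer portion of $B_{R_0}\setminus B_2$ in a single step. Propagating it outward through a chain of overlapping concentric shells, and quantifying the resulting constants so that they remain compatible with the nonlinear recurrence, is the principal technical hurdle.
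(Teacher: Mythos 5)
Your reduction to Lemma \ref{firstDeGiorgilemma} is the right instinct, and you correctly diagnose why the naive application fails: $(\var-\psi)_+$ can be supported on the large ball $B_{R_0}$, $R_0=((1+\sqrt5)/2)^{4/s}$, and the annulus $B_{R_0}\setminus B_2$ contributes an amount of order one, not controlled by $\delta$. From that point on, however, the proposal is not a proof. The plan to rerun the De Giorgi iteration using the good term $B[(\var-\psi_L)_-,(\var-\psi_L)_+]$ is only sketched: the claim that this ``cooling'' dominates the annular error at every truncation level is never quantified, and the obstacle you yourself flag is genuine and left unresolved. The lower bound on $K$ in (\ref{diese}) is only assumed for $|x-y|\le 3$, while $R_0\gg 3$ for small $s$; the negative mass you want to exploit is only known to exist in $B_2$ (that is all the hypothesis gives), so it cannot reach the outer part of the annulus through the kernel, and there is no information on intermediate shells from which a chain-of-shells argument could even start. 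The gap sits exactly where the difficulty of the corollary lies.

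The paper removes the annulus altogether by a scaling argument, which is the idea you are missing. Fix $R\ge 2$, depending only on $s$, such that $1+\psi_1\le\psi$ outside $B_R$. For each $(t_0,x_0)\in[-1,0]\times B_1$ consider the parabolic zoom-in $w_R(t,x)=\var(t_0+t/R^{s},\,x_0+x/R)$ on $[-2,0]\times\R^N$. It solves an equation of type (\ref{main}) with a kernel still satisfying (\ref{diese}) with the same $\Lambda$: zooming in stretches the region where the lower bound is needed into $|x-y|\le 3R$, so the restriction $|x-y|\le 3$ survives (this is why the direction of the scaling matters, and why your outward propagation problem never arises). Since $\psi_1$ is radially increasing and $R\ge2$, the bound $\var\le 1+\psi_1$ transfers to $w_R\le 1+\psi_1$, hence $w_R\le\psi$ outside $B_R$ and $(w_R-\psi)_+$ is supported in $B_R$. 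The key point is that, undoing the change of variables, $[-2,0]\times B_R$ in the new coordinates corresponds to a subset of $[-2,0]\times B_2$ in the original ones, where $0\le\var_+\le 1+\psi_1(2)$ and the measure hypothesis applies; the Jacobian gives $\int_{-2}^{0}\int_{\R^N}(w_R-\psi)_+^2\,dx\,dt\le R^{N+s}(1+\psi_1(2))^2\delta$. Choosing $\delta=R^{-(N+s)}(1+\psi_1(2))^{-2}\eps_0$ and applying Lemma \ref{firstDeGiorgilemma} to $w_R$ gives $\var(t_0,x_0)=w_R(0,0)\le 1/2$. In short: instead of fighting the far annulus with the good term, rescale so that the entire region where the truncation can be positive is seen, in the original variables, inside $B_2$, where the smallness assumption lives.
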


\begin{proof}
Consider $R\geq2$ such that $1+\psi_1(R)\leq \psi(R)$. 
 Note that $R$ depends only on $s$.
 For any $(t_0,x_0)\in [-1,0]\times B_{1}$ we introduce $w_R$ defined on $(-2,0)\times\R^N$ by
 $$
 w_R(t,x)=w((t-t_0)/R^s,(x-x_0)/R).
 $$
 Note that $w_R$  verifies the equation (\ref{main}) with an other kernel verifying Hypothesis (\ref{diese}) with the same constant $\Lambda$.
 Since $\psi_1$ increase with respect to $|x|$, for $|x|>1$ we have
 $$
 w_R(t,x)\leq 1+\psi_1\left(\frac{x-x_0}{R}\right)\leq 1+\psi_1\left(\frac{1+|x|}{R}\right)\leq 1+\psi_1\left((2/R)|x|\right)\leq 1+\psi_1(x).
 $$
 So, from the definition of $R$, for $|x|\geq R$ we have $w_R(t,x)\leq \psi(x)$.
 Hence,
from the hypothesis we have
\begin{eqnarray*}
&&\int_{-2}^0\int_{\R^N}(w_R(t,x)-\psi(x))_+^2\,dx\,dt=\int_{-2}^0\int_{B_{R}}(\var_R(t,x)-\psi(x))_+^2\,dx\,dt \\
&&\qquad\qquad\leq R^{N+s}\int_{-2}^0\int_{B_2}(\var(t,x))_+^2\,dx\,dt\leq R^{N+s}(1+\psi_1(2))^2\delta.
\end{eqnarray*}
So, choosing $\delta=R^{-(N+s)}(1+\psi_1(2))^{-2}\eps_0$ gives that $w(t_0,x_0)\leq 1/2$ for $(t_0,x_0)\in(-1,0)\times B_1$.

\end{proof}

\section{The second De-Giorgi's lemma}

This section is dedicated to a lemma of local decrease of the oscillation of a solution to Equation (\ref{main}). 
%
We define the following function
\begin{equation}\label{F}
\pha(x)=\sup(-1,\inf(0,|x|^2-9)).
\end{equation}
Note that $\pha$ is Lipschitz, compactly supported in $B_{3}$, and  equal to  -1 in $B_2$.

For  $\lambda<1/3$, we define
\begin{eqnarray*}
\psi_{\lambda}(x)&=&0,\qquad {\mathrm{if}} \ \  |x|\leq \frac{1}{\lambda^{4/s}},\\
&=& ((|x|-1/\lambda^{4/s})^{s/4}-1)_+,\qquad {\mathrm{if}} \ \  |x|\geq\frac{1}{\lambda^{4/s}} . 
\end{eqnarray*}

The normalized lemma will involve three consecutive cut-offs:
$$\varphi_0 = 1+\psi_\lambda+\pha,$$
$$\varphi_1 = 1+\psi_\lambda+\lambda\pha,$$
$$\varphi_2 = 1+\psi_\lambda+\lambda^2\pha.$$

We prove the following lemma:
\begin{lemma}\label{lemma2}
Let $\Lambda$ be the given constant in condition (\ref{diese}) and  $\delta$ the constant defined in Corollary \ref{cor2}. Then, there exists $\mu>0$, $\gamma>0$, and $\lambda \in (0,1) $,  depending only on $N$,  $\Lambda$, and $s$, such that for any solution $\var : [-3, 0] \times \mathbb{R}^{N} \rightarrow \mathbb{R}$ to (\ref{main}) satisfying 
$$
\var (t,x)  \leq 1 + \psi_\lambda(x)\qquad \mathrm{on}  [-3 , 0]\times \mathbb{R}^{N}, 
$$
$$
|\{\var<\varphi_0\}\cap((-3,-2)\times B_{1})|\geq \mu,
$$
 then  we have either
 $$
|\{\var>\varphi_2\}\cap((-2,0)\times\R^N)|\leq \delta,
$$
 or
  $$
  |\{\varphi_0<\var <\varphi_2\}\cap((-3,0)\times\R^N)|\geq \gamma.
  $$ 
 \end{lemma}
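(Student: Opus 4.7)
The plan is a proof by contradiction exploiting the ``good term'' identified in Lemma \ref{firstDeGiorgilemma}. Assume both alternatives in the conclusion fail, so
\begin{equation*}
|\{\var>\varphi_2\}\cap((-2,0)\times\R^N)|>\delta\qquad\text{and}\qquad |\{\varphi_0<\var<\varphi_2\}\cap((-3,0)\times\R^N)|<\gamma.
\end{equation*}
The heart of the argument is that the nonlocal dissipation $B[(\var-\varphi_1)_-,(\var-\varphi_1)_+]$ becomes large whenever the ``above $\varphi_2$'' set and the ``below $\varphi_0$'' set are simultaneously large, so a time-integrated bound on it must eventually force either the desired simultaneity (yielding a contradiction with the hypothesis) or a sizeable middle layer.

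The first step is to test the weak form (\ref{main}) with $\eta=(\var-\varphi_1)_+$. Decomposing $\var=\varphi_1+(\var-\varphi_1)_++(\var-\varphi_1)_-$ and mimicking the drift estimate of Lemma \ref{firstDeGiorgilemma} (noting that $\varphi_1-(1+\psi_\lambda)=\lambda\pha$ is Lipschitz and compactly supported in $B_3$, while $\psi_\lambda$ is treated exactly as in Lemma \ref{firstDeGiorgilemma}), I obtain
\begin{equation*}
\frac{d}{dt}\int(\var-\varphi_1)_+^2\,dx+B[(\var-\varphi_1)_+,(\var-\varphi_1)_+]+2G(t)\leq C_{N,s,\Lambda},
\end{equation*}
where $G(t):=B[(\var-\varphi_1)_-,(\var-\varphi_1)_+]\ge 0$. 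Since $\var\le 1+\psi_\lambda$ and $\varphi_1=1+\psi_\lambda$ outside $B_3$, the function $(\var-\varphi_1)_+$ is supported in $B_3$ and bounded by $\lambda$, so $\int(\var-\varphi_1)_+^2\,dx\le C\lambda^2$ uniformly in $t$. Integrating over $[-3,0]$ yields $\int_{-3}^0 G(t)\,dt\le C$. A pointwise lower bound on $G$ then follows from the gap estimates $(\var-\varphi_1)_+\ge \lambda(1-\lambda)|\pha|$ on $\{\var>\varphi_2\}$ and $-(\var-\varphi_1)_-\ge(1-\lambda)|\pha|$ on $\{\var<\varphi_0\}$: restricting the double integral defining $G$ to $y\in\{\var>\varphi_2\}\cap B_2$ and $x\in\{\var<\varphi_0\}\cap B_1$ (where $|\pha|=1$ and $|x-y|\le 3$, so $K\ge c$) gives
\begin{equation*}
G(t)\ge c\lambda(1-\lambda)^2\,a(t)\,s(t),\quad a(t):=|\{\var(t)>\varphi_2\}\cap B_2|,\ s(t):=|\{\var(t)<\varphi_0\}\cap B_1|.
\end{equation*}

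Combining the two previous displays yields $\int_{-3}^0 a(t)s(t)\,dt\le C$, which is \emph{not} yet a contradiction with the hypothesis $\int_{-3}^{-2}s\gtrsim\mu$ and (under the negated conclusion (a)) $\int_{-2}^0 a\gtrsim\delta$, since $a$ and $s$ may concentrate on disjoint time intervals. The genuine nonlocal content of the lemma---and the main obstacle---is propagating the past largeness of $s$ forward in time until it overlaps a time where $a$ is also large. To do this, I would run a parallel energy estimate obtained by testing (\ref{main}) with $(\varphi_0-\var)_+$: this controls the $L^2$ mass of $(\varphi_0-\var)_+$ and supplies an analogous good-term interaction between $\{\var<\varphi_0\}$ and $\{\var>\varphi_0\}$. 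Using this propagation estimate together with the mass partition $a_1(t)+s(t)+m_1(t)=|B_1|$ (where $a_1\le a$ and $m_1\le m$), and the middle-layer smallness $\int m\le\gamma$ (which by Chebyshev forces $m_1(t)\le\sqrt\gamma$ off a set of small measure, so $a_1$ and $s$ are essentially complementary at each typical time), one can show that if conclusion (a) fails then $s(t)$ must remain above a positive threshold on a sub-interval of $(-3,0)$ where $a(t)$ is likewise above a positive threshold. On that sub-interval $G(t)$ is bounded below by a fixed positive constant, contradicting $\int G\,dt\le C$. The parameters are tuned in the order $\lambda$ (from the proof of Corollary \ref{cor2}), then $\gamma$, then $\mu$ small enough, with all thresholds depending only on $N$, $s$, $\Lambda$.
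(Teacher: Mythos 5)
Your first step is fine (and correctly identifies that $(\var-\varphi_1)_+$ is supported in $B_3$ and bounded by $\lambda$), but the second half of your argument is where the lemma actually lives, and there it has a genuine gap. The pivotal claim --- that if alternative (a) fails and the middle layer is smaller than $\gamma$, then $s(t)=|\{\var(t)<\varphi_0\}\cap B_1|$ and $a(t)=|\{\var(t)>\varphi_2\}\cap B_2|$ must be simultaneously bounded below on a time interval of fixed length --- is exactly the hard quantitative content, and you assert it rather than prove it. Worse, the tool you propose for it is not available: testing with $(\varphi_0-\var)_+$ requires some control of $\var$ from below, while the lemma assumes only the upper bound $\var\le 1+\psi_\lambda$; the function $(\varphi_0-\var)_+$ need not be in $L^2$ (or even finite in energy), so there is no ``parallel energy estimate'' to run, and no mechanism by which the past largeness of $s$ propagates forward. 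In addition, your energy inequality has right-hand side $C_{N,s,\Lambda}$ rather than $C\lambda^2$; since $(\var-\varphi_1)_+\le\lambda$, any use of the good term must beat the factors of $\lambda$ coming from the truncation, and a bound $\int G\,dt\le C$ is too weak for that (your own display $\int a\,s\,dt\le C$ already silently drops a $1/\lambda$). The $\lambda^2$ on the right-hand side is the whole reason $\psi_\lambda$ is built with the flat region $|x|\le\lambda^{-4/s}$ and the cutoff is scaled by $\lambda\pha$; without reproducing that computation the scheme cannot close.

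The paper resolves the time-decorrelation issue in a different way, which avoids any lower bound on $\var$: it tracks the scalar $H(t)=\int(\var-\varphi_1)_+^2\,dx$. From the hypothesis on $\{\var<\varphi_0\}$ and the good term (bounded by $C\lambda^2$ in time integral), $H$ is of size at most $\lambda^{3-1/4}$ at many times $T_1\in(-3,-2)$; at the first time $T_0>-2$ where $|\{\var(T_0)>\varphi_2\}|>\delta/2$, the gap $\varphi_1-\varphi_2=(\lambda-\lambda^2)\pha$ forces $H(T_0)\ge c\lambda^2\delta^3$; and the slow growth $H'\le C\lambda^2$ then produces an intermediate set of times $D$ of length $\sim\delta^3$ on which $H$ sits between $\lambda^{3-1/4}$ and $c\lambda^2\delta^3$. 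On $D$ the upper bound on $H$ forces $|\{\var>\varphi_2\}|\le C\delta$, and a second use of the good term (now with the lower bound $H\ge c\lambda^2\delta^3$) shows that, except on an exceptional set of times of measure $\le|D|/2$, $|\{\var<\varphi_0\}\cap B_2|<\mu$; hence the intermediate region has measure $\ge 1/2$ on at least $|D|/4$ worth of times, yielding $\gamma\sim\delta^3$. So the missing ingredients in your proposal are precisely the sharpened $C\lambda^2$ remainder estimate and the $H$-crossing argument that substitutes for the forward-in-time propagation you hoped to obtain from an inadmissible test function.
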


The lemma says that in going from the $\varphi_0$ cut off to the $\varphi_2$
cut off, i.e., from the set 
$\{ \var> \varphi_0\}$
to 
$\{ \var>\varphi_2\}$
``some mass'' is lost, i.e., if $|\{ \var>\varphi_2\}|$ is not yet subcritical 
(i.e., $\le \delta$)
then 
$$|\{ \var>\varphi_2\}| \le |\{ \var>\varphi_0\}| -\gamma.$$ 

\begin{proof} In all the proof, we denote by $C$ constants which depend only on $s$, $N$ and $\Lambda$, but which can change from a line to another.
We may fix any $0<\mu<1/8$. We will fix $\delta$ smaller than the one in Corollary \ref{cor2} and such that the term $C\delta$  in (\ref{Cdelta})
is smaller than $1/4$. The task consists now in showing that for $0<\lambda<1/3$ small enough, there exists a $\gamma>0$ for which the lemma holds.  
The constraints on $\lambda$ are (\ref{lambda1}), (\ref{lambda2}), (\ref{lambda3}), and (\ref{lambda4}).
We split the proof into several steps.

\noindent{\bf First step: The energy inequality.}
We start again with  the energy inequality (\ref{energie}), but  use better the ``good''  term
$$B((\var-\varphi)_+, (\var-\varphi)_-)
= \iint_{\R^{2N}} (\var-\varphi)_+ (x)K(t,x,y)(\var-\varphi)_{\mathrm{neg}} (y))\,dx\,dy$$
that we just neglected before.
\medskip

We have, for  $\varphi_1$ the intermediate cut off (see (\ref{energie})):
\begin{equation*}
\begin{split}
&\int ((\var-\varphi_1)_+)^2 \,dx \Big|_{T_1}^{T_2} + \int_{T_1}^{T_2} 
B( (\var-\varphi_1)_+,(\var-\varphi_1)_+) \,dt=\\
\noalign{\vskip18pt}
&\qquad 
- \int_{T_1}^{T_2} B((\var-\varphi_1)_+,\varphi_1) \,dt
- \int_{T_1}^{T_2} B((\var-\varphi_1)_+, (\var-\varphi_1)_-)\,dt.
\end{split}
\end{equation*}
The remainder term can be controlled in the following way.
$$B((\var-\varphi_1)_+,\varphi_1) \le \frac12 B((\var-\varphi_1)_+,(\var-\varphi_1)_+) 
+ 2\mkern-6mu
\iint [\varphi_1 (x) -\varphi_1(y)] K (x,y)
[\varphi_1 (x) -\varphi_1(y)] [\chi_{B_{3}} (x)].$$
The first term $\frac12 B((\var-\varphi_1)_+, (\var-\varphi_1)_+)$ is absorbed on the left.  
The second one is smaller than
$$
4\lambda^2\iint [F (x) -F(y)] K (x,y)
[F(x) -F(y)] +4\iint [\psi_\lambda (x) -\psi_\lambda (y)] K (x,y)
[\psi_\lambda (x)-\psi_\lambda (y)] [\chi_{B_{3}} (x)],
$$
which is smaller that $C\lambda^2$. This is obvious for the first term since $F$ is Lipschitz and compactly supported.  Since $\psi_\lambda(x)=0$ for $|x|<3$, the second term is equal to 
\begin{eqnarray*}
&&\qquad\qquad 4\iint\psi_\lambda(y)^2 [\chi_{B_{3}} (x)]K(t,x,y)\,dx\,dy\\
&& \leq 4(1-s/2)\Lambda|B_3|\int_{\{|y|>1/\lambda^{4/s}\}} \frac{(((|y|-1/\lambda^{4/s})^{s/4}-1)^2_+}{(|y|-3)^{N+s}}\,dy\\
&&\leq 4(1-s/2)\Lambda |B_3|\lambda^2\int_{\{|z|>1\}} \frac{(((|z|-1)^{s/4}-\lambda)^2_+}{(|z|-3\lambda^{4/s})^{N+s}}\,dz\\
&&\leq 4(1-s/2)\Lambda |B_3|\lambda^2\int_{\{|z|>1\}} \frac{(((|z|-1)^{s/4})^2_+}{(|z|-1/3)^{N+s}}\,dz\\
&&\leq C\lambda^2,
\end{eqnarray*}
since $\lambda<1/3$.

This leaves us with the inequality 
\begin{equation*}
\begin{split}
&\int (\var-\varphi_1)_+^2 dx \Big|_{T_1}^{T_2} + \frac12 \int_{T_1}^{T_2} 
B((\var-\varphi_1)_+, (\var-\varphi_1)_+) \,dt\\
\noalign{\vskip18pt}
&+  \int_{T_1}^{T_2} \int_{\R^{2N}} (\var-\varphi_1)_+ (x) K(x,y) 
(\var-\varphi_1)_{\mathrm{neg}} (y)\,dx\,dy\,dt\le C\ \lambda^2 (T_2-T_1).
\end{split}
\end{equation*}
In particular, since the second and third terms are positive, 
we get that for $-3< T_1<T_2<0$:

 $$\ds H(t) = \int_{\R^N} (\var-\varphi_1)_+^2 (t,x) dx$$

satisfies 
$$H' (t) \le C\ \lambda^2,$$
and 
\begin{equation}\label{goodterm}
\ds \int_{T_1}^{T_2} \int_{\R^{2N}} (\var-\varphi_1)_+ (x) K(x,y) 
(\var-\varphi_1)_{\mathrm{neg}} (y)\,dx\,dy\,dt\le C\ \lambda^2 [T_2-T_1].
\end{equation}
Note that, up to now, those estimates hold for any $0<\lambda<1/3$.
\vskip0.3cm
\noindent{\bf Second step:  An estimate on those time slices  where the ``good'' extra term helps.}
Remember that  $\mu<1/8$ is fixed from the beginning of the proof. 
From our hypothesis
$$|\{ \var< \varphi_0\}\cap((-3,-2)\times B_1)| \ge \mu,$$
the set of times $\Sigma$ in $(-3,-2)$ for which $|\{ \var(\cdot,T)<\varphi_0\}\cap B_1| \ge \mu/4$ 
has at least measure $\mu/(2|B_{1}|)$.


We estimate now that except for a few of those time slices, 
$\ds\int_{\R^N} (\var-\varphi_1)_+^2\,dx$ is very tiny:
\bigskip

Since \quad $\ds\inf_{|x-y| \le3} K(t,x,y) \geq C\Lambda^{-1}$\quad we have that
\begin{equation*}
\begin{split}
C\,\lambda^2 
&\ge \int_{-3}^{-2} B((\var-\varphi_1)_+,(\var-\varphi_1)_-)dt 
\ge C\,\Lambda^{-1} \frac{\mu}{8} \int_{ \Sigma} \int_{\R^N}(\var-\varphi_1)_+ \,dx\,dt\\
\noalign{\vskip12pt}
&\ge C\,\Lambda^{-1}\frac{\mu}{8\lambda} \int_{\Sigma}\int_{\R^N} 
(\var-\varphi_1)_+^2\,dx\,dt
\end{split}
\end{equation*}
since $(\var-\varphi_1)_+ \le \lambda$.


In other words
$$\int_{\Sigma}\int_{\R^N} [(\var-\varphi_1)_+(x)]^2\,dx\,dt
\le \overline{C} \ \frac{\lambda^3}{\mu} 
\le  \lambda^{3-1/8}$$
if $\lambda$ is small enough such that
\begin{equation}\label{lambda1}
\lambda\leq \left(\frac{\mu}{\overline{C}}\right)^8.
\end{equation}
\bigskip

In particular, from Tchebychev's inequality:
\begin{equation}\label{eq_10}
\int (\var-\varphi_1)_+^2 (t,x)\,dx \le  \lambda^{3-\frac14} 
\end{equation}
for all $t\in \Sigma$, except for a very small subset $F$ of $t$'s of measure smaller than $\lambda^{1/8}$. We need it
still much smaller than $\mu \sim |\Sigma|$. indeed, if $\lambda$ is small enough such that
\begin{equation}\label{lambda2}
\lambda\leq \left(\frac{\mu}{4|B_1|}\right)^8,
\end{equation}
then,  (\ref{eq_10})
 holds on a set of $t$s  in $[-3,-2]$ of measure bigger than $\mu/(4|B_1|)$.
\vskip0.3cm
\noindent{\bf Third step.  In search of an intermediate set,
 where $\var$ is between $\varphi_0$ and 
$\varphi_2$.}
Let us go now to $(\var-\varphi_2)_+$.

Assume that for at least one time $T_0>-2$,
$$|\{ x\ | \ (\var-\varphi_2)_+ (T_0,x)>0\}| > \delta/2 ,$$
i.e., goes over critical for the first lemma and let's go backwards in time 
until we reach a slice of time $T_1 \in \Sigma$, where 
$$\int_{\R^N} (\var-\varphi_1)_+^2 (T_1,x)\,dx\le  \lambda^{3-\frac14}.$$


At $T_0$, for the intermediate cut off, $\varphi_1$, we have 
\begin{equation}\label{exemple}
\begin{array}{l}
\ds{\int_{\R^N} (\var-\varphi_1)_+^2(T_0,x)\,dx \ge \int (\varphi_1 -\varphi_2)^2 \chi_{\{(\var-\varphi_2)_+>0\}}}\\[0.3cm]
\qquad
\ds{\ge \int (\lambda-\lambda^2)^2F^2(x) \chi_{\{(\var-\varphi_2)_+>0\}} \geq C_F\frac{\lambda^2}{4} \delta^3,}
\end{array}
\end{equation}
where the constant $C_F$ depends only on the fixed function $F$. Indeed we have
 $\lambda<1/2$, and $F$ is increasing with respect to $|x|$ and smaller than $-C(3-|x|)$ for $|x|<3$  closed to $3$.  Hence, the integral is minimum
 when all the mass $\{(\var-\varphi_2)_+>0\}$ is concentrated on $3-C\delta<|x|<3$.
\vskip0.3cm
Now,  at $T_1$, 
$$\int_{\R^N} (\var-\varphi_1)_+^2(T_1,x)\,dx \le   \lambda^{3-\frac14}\ .$$


Thus, for $\lambda$ small enough such that
\begin{equation}\label{lambda3}
\lambda^{1-1/4}\leq C_F\frac{\delta^3}{64},
\end{equation}
in going from $T_0$ backwards to $T_1$, $H(t) = \int_{\R^N} (\var-\varphi_1)_+^2(t,x)\,dx$ 
has crossed a range between two multiples of $\delta^3\lambda^2$, 
from say $\lambda^2 \frac{\delta^3}8$, to 
$\lambda^2 \frac{\delta^3}{16}$. 
  Since $H' (t) \le C\ \lambda^2$, in order to do so 
it needed in a range of times $D$, of at 
least length $\sim\delta^3$, where
$$
D=\{t\in(T_1,T_0): \ \lambda^{3-1/4}<H(t)<C_F\frac{\lambda^2}{4}\delta^3\}.
$$ 

We want to show that in this range, we pick up an intermediate set, of nontrivial measure, where $(\var-{\varphi_0})_+ >0$ and $(\var-\varphi_2)_+=0$,
implying that the measure 
$$\A_2 = |\{ (\var-\varphi_2)_+ >0\}\cap\{t\in (-3,0)\}|$$
effectively decreases some fixed amount from 
$$\A_0 = |\{ \var-{\varphi_0})_+ >0\}\cap\{t\in(-3,0)\}|\ .$$


In these range of times $D$, given the gap between $\varphi_1$ and $\varphi_2$
\begin{equation}\label{Cdelta}
|\{ (\var-\varphi_2)_+ > 0\}| \le C\ \delta
\end{equation}
(if not 
$\int (\var-\varphi_1)_+^2 > \delta^3\lambda^2$ as in the computation of (\ref{exemple})). As said in the beginning of the proof, we may consider a $\delta$
such that $C\delta<1/4$. 
Moreover, those times of $D$ for which 
$$|\{(\var-\varphi_0)_+ \leq 0\}\cap B_2| \ge \mu$$
are in an exceptional subset $\mathcal{F}$ of very small size.
Indeed
\begin{eqnarray*}
&&C\lambda^2\geq\int_{-3}^0\int_{\R^N}(w-\varphi_1)_+K(t,x,y)(w-\varphi_1)_{\mathrm{neg}}\\
&&\qquad \geq C\mu \int_{\mathcal{F} }\int_{B_3}(w-\varphi_1)_+\,dx\,dt\geq \frac{C\mu}{\lambda}\int_{\mathcal{F}}\int_{\R^N}(w-\varphi_1)_+^2\,dx\,dt\\
&&\qquad \geq \frac{C\mu|\mathcal{F}|(\lambda^2\delta^3/16)}{\lambda}.
\end{eqnarray*}
Hence
$$
|\mathcal{F}|\leq C \frac{\lambda}{\mu\delta^3}.
$$
And so, for $\lambda$ small enough such that 
\begin{equation}\label{lambda4}
\lambda\leq \mu\delta^3 |D|/(2C),
\end{equation}
we have 
$$
|\mathcal{F}|\leq\frac{|D|}{2}.
$$
Note that the constraint (\ref{lambda4}) can be expressed depending only on $s$, $N$, $\Lambda$, $\delta$, and, $\mu$, since $|D|<C\delta^3$.
  For these times in $D$ not in $\mathcal{F}$, we have: 
$$A(t) = |\{ \varphi_0 \le \var(t,\cdot)\le \varphi_2\}| \ge 1/2.$$


That is
\begin{eqnarray*}
&&|\{\varphi_0< w< \varphi_2\}\cap((-3,0)\times\R^N)|\geq \int_{-3}^0 A(t)\,dt\\
&&\qquad\qquad \geq \int_{D\setminus \mathcal{F}} A(t)\,dt\geq  \frac{|D|}{4}\geq C\delta^3.
\end{eqnarray*}
\end{proof}

\section{Proof of the $C^\alpha$ regularity}

we are now ready to show the following oscillation lemma.
First, for $\lambda$ as in the previous section, we define for any $\eps>0$
\begin{eqnarray*}
\psi_{\eps,\lambda}(x)&=&0,\qquad {\mathrm{if}} \ \  |x|\leq \frac{1}{\lambda^{4/s}},\\
&=& ((|x|-1/\lambda^{4/s})^{\eps}-1)_+,\qquad {\mathrm{if}} \ \  |x|\geq\frac{1}{\lambda^{4/s}} . 
\end{eqnarray*}

\begin{lemma}\label{lastlemma}
there exists $\eps>0$ and $\lambda^*$ such that for any solution to (\ref{main}) in $[-3,0]\times \R^N$ such that
$$
-1-\psi_{\eps,\lambda}\leq\var\leq 1+\psi_{\eps,\lambda},
$$ 
we have
$$
\sup_{[-1,0]\times B_1}\var -\inf_{[-1,0]\times B_1}\var\leq 2-\lambda^*.
$$
\end{lemma}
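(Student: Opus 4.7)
The plan is to iterate Lemma \ref{lemma2} a bounded number of times, using a disjointness argument to force its ``first alternative'' after finitely many steps, and then close with a single application of Corollary \ref{cor2}.

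\textbf{Setup.} Up to replacing $w$ by $-w$ (which preserves (\ref{main}) and the symmetric bound $|w|\le 1+\psi_{\eps,\lambda}$), we may assume $|\{w\le 0\}\cap((-3,-2)\times B_1)|\ge|B_1|/2$. Let $\mu,\gamma,\lambda,\delta$ be the constants from Lemma \ref{lemma2} and Corollary \ref{cor2}; the proof of Lemma \ref{lemma2} is valid for any $\mu\in(0,1/8)$, so we may assume $\mu\le\min(|B_1|/2,1/9)$. Set $K_0:=\lceil 3|B_3|/\gamma\rceil+1$ and $\lambda^*:=\tfrac12\lambda^{2K_0+2}$. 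The quantitative heart of the argument is to choose $\eps>0$ small enough, depending only on $N,s,\Lambda$, so that
\begin{equation}\label{lastlemma-tail}
\lambda^{-2k}\psi_{\eps,\lambda}\le\psi_\lambda\ \ (1\le k\le K_0),\qquad \lambda^{-2K_0-2}\psi_{\eps,\lambda}\le\psi_1,
\end{equation}
both on $\R^N$. Each inequality reduces to a one-variable check in $r:=|x|-1/\lambda^{4/s}$: both sides vanish for $r\le 0$, the difference vanishes at $r=1$, and a derivative computation shows the difference is non-decreasing for $r\ge 1$ as soon as $\eps\le\tfrac{s}{4}\lambda^{2K_0+2}$, so such an $\eps$ exists.

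\textbf{Iteration.} For $1\le k\le K_0$ put $w_k(t,x):=\lambda^{-2k}(w(t,x)-1)+1$; each $w_k$ solves (\ref{main}) because (\ref{star}) is invariant under $w\mapsto aw+b$. The first inequality in (\ref{lastlemma-tail}) gives $w_k\le 1+\psi_\lambda$ on $[-3,0]\times\R^N$, and since $\varphi_0\equiv 0$ on $B_1$ and $1-\lambda^{2k}>0$ for $k\ge 1$, the inclusion $\{w\le 0\}\subset\{w<1-\lambda^{2k}\}=\{w_k<\varphi_0\}$ on $B_1$ yields $|\{w_k<\varphi_0\}\cap((-3,-2)\times B_1)|\ge\mu$. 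Lemma \ref{lemma2} applied to $w_k$ then returns, for every $k$, one of
\[ (\text{A})_k\ |\{w_k>\varphi_2\}\cap((-2,0)\times\R^N)|\le\delta, \qquad (\text{B})_k\ |A_k|\ge\gamma, \]
where $A_k:=\{\lambda^{2k}(\psi_\lambda+F)<w-1<\lambda^{2k}\psi_\lambda+\lambda^{2k+2}F\}\cap((-3,0)\times\R^N)$. Because $F$ is supported in $B_3\subset B_{1/\lambda^{4/s}}$ (since $\lambda<1/3$ and $s<2$ give $1/\lambda^{4/s}>9$), while $\psi_\lambda$ vanishes on that ball, $A_k\subset(-3,0)\times B_3$ and there reduces to $\{\lambda^{2k}F(x)<w(t,x)-1<\lambda^{2k+2}F(x)\}$. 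With $F\in[-1,0]$ and $0<\lambda<1$, the intervals $(\lambda^{2k}F(x),\lambda^{2k+2}F(x))$ for distinct $k$ share at most an endpoint, so the $A_k$ are pairwise disjoint with total measure at most $3|B_3|<K_0\gamma$. Consequently $(\text{A})_{k^*}$ must hold for some $k^*\in\{1,\dots,K_0\}$.

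\textbf{Closing step.} On $B_2$ we have $\psi_\lambda=0$ and $F=-1$, so $\{w_{k^*}>\varphi_2\}\cap B_2=\{w>1-\lambda^{2k^*+2}\}\cap B_2$, and $(\text{A})_{k^*}$ specializes to $|\{w>1-\lambda^{2k^*+2}\}\cap((-2,0)\times B_2)|\le\delta$. Perform one last affine rescaling $\hat w(t,x):=\lambda^{-2k^*-2}(w(t,x)-1+\lambda^{2k^*+2})$; then $\hat w$ again solves (\ref{main}), $|\{\hat w>0\}\cap((-2,0)\times B_2)|\le\delta$, and the second inequality of (\ref{lastlemma-tail}) gives $\hat w\le 1+\psi_1$ on $[-2,0]\times\R^N$. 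Corollary \ref{cor2} then yields $\hat w\le 1/2$ on $[-1,0]\times B_1$, i.e.\ $w\le 1-\tfrac12\lambda^{2k^*+2}\le 1-\lambda^*$. Since $\psi_{\eps,\lambda}\equiv 0$ on $B_1$, the lower bound $w\ge-1-\psi_{\eps,\lambda}$ gives $w\ge -1$ there, so $\osc_{[-1,0]\times B_1}w\le(1-\lambda^*)-(-1)=2-\lambda^*$, which is the conclusion.

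\textbf{Main obstacle.} The only non-routine point is the uniform tail inequality (\ref{lastlemma-tail}): each affine rescaling multiplies the tail of $w$ by $\lambda^{-2}$, and after up to $K_0$ rescalings the amplified tail still has to sit under $\psi_\lambda$ (and under $\psi_1$ at the last step). Because $K_0$ is determined by $\gamma$ (itself fixed by Lemma \ref{lemma2} once $\mu$ is chosen), $\eps$ must be taken strictly smaller than $s/4$ with explicit size $\sim\lambda^{2K_0}$. This is precisely the reason Lemma \ref{lastlemma} is stated with a tunable tail exponent $\eps$ rather than with the fixed $\psi_\lambda$ used in Lemma \ref{lemma2}.
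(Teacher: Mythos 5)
Your proof is correct and follows essentially the same strategy as the paper: the same iterated affine rescaling $w\mapsto\lambda^{-2}(w-1)+1$, the same choice of tail exponent $\eps\sim(s/4)\lambda^{2K_0}$ so that the amplified tails stay under $\psi_\lambda$ (and under $\psi_1$ at the last step), repeated use of Lemma \ref{lemma2} with a pigeonhole bound of order $|(-3,0)\times B_3|/\gamma$ on the number of iterations, and a final application of Corollary \ref{cor2} yielding $\lambda^*\sim\lambda^{2K_0}/2$. The only (cosmetic) difference is the bookkeeping of the pigeonhole: you use pairwise disjointness of the intermediate sets $A_k$ pulled back to the original $w$, whereas the paper tracks the decrease of $|\{w_k>\varphi_2\}|$ by $\gamma$ at each step; both give the same bound on the number of iterations.
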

\begin{proof}
We may assume that 
$$
|\{w<\varphi_0\}\cap((-3,-2)\times B_1)|>\mu.
$$
Otherwise this is verified by $-w$, and we may work on this function.

Consider $k_0=|(-3,0)\times B_3|/\gamma$. Then we fix $\eps$ small enough such that 
$$
\frac{(|x|^\eps-1)_+}{\lambda^{2k_0}}\leq (|x|^{s/4}-1)_+,
$$
for all $x$. We may take $\eps=(s/4)\lambda^{2k_0}$ for instance.
For $k\leq k_0$, we consider the sequence
$$
\var_{k+1}= \frac1{\lambda^2} (\var_k-(1-\lambda^2)),\qquad \var_1=\var.
$$
By induction, we have that
$$
(w_k)_+(t,x)\leq 1+\frac{1}{\lambda^{2k}}\psi_{\eps,\lambda}(x), \qquad t\in(-3,0), x\in \R^N.
$$
So, for $k\leq k_0$ we have $w_k\leq 1+\psi_\lambda$.
By construction $|\{w_k< \varphi_0\}\cap(-3,-2)\times B_1|$ is increasing, so bigger than $\mu$ for any $k$.
Hence, we can apply Lemma \ref{lemma2} on $w_k$. 
As long as $|\{w_k>\varphi_2\}\cap ((-2,0)\times \R^N)|\geq \delta$, we have
$$
|\{w_{k+1}> \varphi_0\}|=|\{w_{k+1}>\varphi_2\}|
+|\{\varphi_0< w_{k+1}< \varphi_2\}|,
$$
and
 \begin{eqnarray*}
&& |\{w_{k+1}> \varphi_2\}|\leq |\{w_{k+1}>\varphi_0\}|-\gamma\\
&&\qquad\qquad\leq  |\{w_{k}>\varphi_2\}|-\gamma\leq |(-3,0)\times B_3|-k\gamma. 
 \end{eqnarray*}
 This cannot be true up to $k_0$. 
 So there exists $k\leq k_0$ such that 
$$
|\{\var_k>\varphi_2\}\cap((-2,0)\times \R^N)|\leq \delta.
$$
We can then apply the first De Giorgi lemma on $\var_{k+1}$. Indeed
$$
w_{k+1}\leq 1+\psi_\lambda\leq 1+\psi_1, \qquad \mathrm{on} \ (-3,0)\times\R^N,
$$
and
\begin{eqnarray*}
&&|\{w_{k+1}>0\}\cap((-2,0)\times B_2)|\leq |\{w_{k+1}>\varphi_0\}\cap((-2,0)\times B_2)|\\
&&\qquad\qquad \leq  |\{w_{k}>\varphi_2\}\cap((-2,0)\times \R^N)|\leq\delta.
\end{eqnarray*}
Hence, from Corollary \ref{cor2}, we have
$$
w_{k+1}\leq 1/2, \qquad\mathrm{on} (-1,0)\times B_1.
$$
This gives the result with 
$$
\lambda^*=\frac{\lambda^{2k_0}}{2}.
$$
\end{proof}

\medskip
The $C^\alpha$ regularity follows in a standart way. 
\begin{proof}
For any $(t_0,x_0)\in (0,\infty)\times \R^N$, consider first $K_0=\inf(1, t_0/4)^{1/s}$, and
$$
w_0(t,x)=w(t_0+K_0^st,x_0+K_0x).
$$
This function still verifies  an equation  of the type of (\ref{main})  in $(-4,0)\times\R^N$ with a kernel $K$ verifying (\ref{diese}). 
From Corollary \ref{cor1}, It is bounded on $(-3,0)\times\R^N$.
Consider $K<1$ such that 
$$
\frac{1}{1-(\lambda^*/2)}\psi_{\lambda,\eps}(Kx)\leq \psi_{\lambda, \eps}(x),\qquad \mathrm{for} \ |x|\geq1/K.
$$
The coefficient $K$ depends only on $\lambda$, $\lambda^*$ and $\eps$.
Then we define by induction:
\begin{eqnarray*}
&&w_1(t,x)=\frac{w_0(t,x)}{\|w_0\|_{L^\infty}}, \qquad (t,x)\in (-3,0)\times\R^N,\\
&&w_{k+1}(t,x)=\frac{1}{1-\lambda^*/4}\left(w_{k}(K^s t,Kx)-\bar{w}_k\right),\qquad (t,x)\in (-3,0)\times\R^N,
\end{eqnarray*}
where 
$$
\bar{w}_k=\frac{1}{|B_1|}\int_{-1}^0\int_{B_1}w_k(t,x)\,dx\,dt.
$$
By construction, $w_k$ verifies the hypothesis of Lemma \ref{lastlemma} for any $k$. 
Hence:
$$
\sup_{(t_0+(-K^{ks},0))\times(x_0+B_{K^k})}w-\inf_{(t_0+(-K^{ks},0))\times(x_0+B_{K^k})}w\leq C(1-\lambda^*/4)^k.
$$
So, $w$ is $C^\alpha$ with
$$
\alpha=\frac{\ln(1-\lambda^*/4)}{\ln(K^s)}.
$$

\end{proof}

\bibliography{bigproject.bib}

\end{document}